\documentclass[10pt]{article}
\usepackage{latexsym,fancyhdr}

\usepackage{amssymb,amsmath,amsthm,array,eufrak}
\usepackage{dsfont,graphicx}
\usepackage{pst-plot,pstricks,multido}
\usepackage{mathrsfs}

\newtheorem{thm}{Theorem}[section]
\newtheorem{lem}[thm]{Lemma}
\newtheorem{cor}[thm]{Corollary}

\newtheorem{prop}[thm]{Proposition}
\theoremstyle{definition}
\newtheorem{defn}{Definition}

\newtheorem{exam}{Example}
\newtheorem*{remk}{Remark}

\newcommand{\N}{\mathbb{N}}
\newcommand{\Z}{\mathbb{Z}}
\newcommand{\Q}{\mathbb{Q}}
\newcommand{\R}{\mathbb{R}}
\newcommand{\norm}[1]{\left\Vert#1\right\Vert}
\newcommand{\abs}[1]{\left\vert#1\right\vert}
\newcommand{\set}[1]{\left\{#1\right\}}
\newcommand{\paren}[1]{\left(#1\right)}
\newcommand{\brac}[1]{\left[#1\right]}

\newcommand{\iprod}[1]{\left\langle#1\right\rangle}

\newcommand{\one}{\mathds{1}}




\DeclareMathOperator{\Supp}{supp}
\DeclareMathOperator{\graph}{graph}

\newcommand{\reftitle}[1]{#1.}
\newcommand{\refvol}[1]{#1,}
\newcommand{\draftnote}[1]{}

\newcommand{\commentout}[1]{}

\newcommand{\deletedProof}[1]{}

\begin{document}
\title{Supports of Implicit Dependence Copulas}
\author{Songkiat Sumetkijakan\\
Department of Mathematics and Computer Science\\ 
Faculty of Science, Chulalongkorn University\\
Phayathai Road, Bangkok, Thailand}

\date{June 24, 2016}
\maketitle

\begin{abstract}
A copula of continuous random variables $X$ and $Y$ is called an \emph{implicit dependence copula} if there exist functions $\alpha$ and $\beta$ such that $\alpha(X) = \beta(Y)$ almost surely, which is equivalent to $C$ being factorizable as the $*$-product of a left invertible copula and a right invertible copula.  Every implicit dependence copula is supported on the graph of $f(x) = g(y)$ for some measure-preserving functions $f$ and $g$ but the converse is not true in general.  

We obtain a characterization of copulas with implicit dependence supports in terms of the non-atomicity of two newly defined associated $\sigma$-algebras.  As an application, we give a broad sufficient condition under which a self-similar copula 
has an implicit dependence support.   
Under certain extra conditions, we explicitly compute the left invertible and right invertible factors of the self-similar copula.

%
\end{abstract}

\noindent 
Keywords: Markov operators; non-atomic; bivariate copulas; implicit dependence; self-similar copulas; fractal support

\noindent 
MSC2010: 28A20; 28A35; 46B20; 60A10; 60B10 

\section{Introduction} 

It is well-known that the bivariate copula of two random variables 
completely captures their dependence structure.  
Notable examples are the \emph{independence copula} $\Pi(u,v) = uv$, which corresponds to independent random variables, and the copulas of completely dependent random variables, called \emph{complete dependence copulas}.  
Since it was discovered \cite{MikSheTay1992-Shuffles, MikSheTay1991-Probabilistic, KimSam1978-Monotone}  
 that there are complete dependence copulas arbitrarily closed to $\Pi$ in the uniform norm, many norms have been introduced and investigated in the literature \cite{DarOls1995-Norms} giving rise to measures of dependence such as 
$\omega$ in \cite{SibSto2009-Measure} and $\zeta_1$ in \cite{Tru2011-Strong}.
%
These dependence measures defined in terms of the copula's first partial derivatives 
attain the maximum value $1$ at least for complete dependence copulas and the minimum value $0$ when and only when the copula is $\Pi$. 
%
However, with respect to the measure of mutual complete dependence (MCD) $\omega$, the independence copula can still be approximated by \emph{implicit dependence copulas} \cite{ChaSanSum2016-Patched}, defined as copulas of random variables $X$ and $Y$ which are implicitly dependent in the sense that $f(X) = g(Y)$ a.s.~for some Borel measurable functions $f$ and $g$.  
%
%
For R\'enyi-type measures of dependence \cite{Ren1959-Measures} such as $\omega_*$ in \cite{RuaSanSum2013-Shuffles} and $\nu_*$ in \cite{KamSanSum2015-Dependence}, with respect to which all complete dependence copulas have measure $1$, it can be proved that all implicit dependence copulas also have maximum measure $1$. 
It is then evident that implicit dependence copulas plays a crucial role in understanding as well as comparing and contrasting measures of MCD and R\'enyi-type dependence measures.
Closely related and constituting a much larger class than the implicit dependence copulas are the copulas whose supports are that of an implicit dependence copula.
%

We shall investigate 
copulas with implicit dependence supports via their corresponding Markov operators and associated pairs of $\sigma$-algebras.
Our approach is motivated by the characterization of idempotent copulas via their $\sigma$-algebras of invariant sets in \cite{DarOls2010-Characterization}.  
In particular, they proved that non-atomic idempotent copulas must be of the form $L*L^t$ for some left invertible copula $L$ (where $C^t$ denotes the transpose of $C$, i.e.~$C^t(x,y) = C(y,x)$, and $*$ is the product of copulas first defined and studied in \cite{DarNguOls1992-Copulas, OlsDarNgu1996-Copulas}).  
We define $\sigma$-algebras $\sigma_C$ and $\sigma_C^*$ associated with a copula $C$ for which the corresponding Markov operator $T_C$ maps indicator functions of sets in $\sigma_C$ to indicator functions of sets in $\sigma_C^*$.  
%
We prove that copulas with implicit dependence supports are exactly copulas whose both $\sigma$-algebras are non-atomic.  
Our main result finds an application in copulas with fractal supports introduced by Fredricks, Nelsen and Rodr\'iguez-Lallena \cite{FreNelRod2005-Copulas}.  
Given a transformation matrix $A$, there is a unique \emph{self-similar copula} $C_A$ such that $[A](C_A) = C_A$, where $[A]$ maps the class 
of bivariate copulas into itself according to the weights given by the entries in $A$. 
As a consequence, we obtain a broad sufficient condition on a transformation matrix $A$ under which the self-similar copula $C_A$ is non-atomic and hence has an implicit dependence support.  
Working directly with the transformation matrix $A$, a sufficient condition under which $C_A$ is an implicit dependence copula is also given.
It would be even more interesting if we had a characterization of implicit dependence copulas via behaviors of their $\sigma$-algebras.  
We are hopeful that our future attempts will not be futile as such a characterization would be beneficial in studies involving products of implicit dependence copulas.

%
%
%
%

The manuscript is organized as follows.  Section \ref{section:background} lays the necessary background on copulas and Markov operators for the rest of the paper.  We then define the associated $\sigma$-algebras of a copula and prove their basic properties in Section \ref{section:sigma-algebras}.  Section \ref{section:non-atomic} gives a definition of non-atomic copulas and some of their fundamental properties summarizing in a characterization of non-atomic copulas.  In the final section, the characterization is used in an investigation of copulas with fractal support.  We also give a sufficient condition on a transformation matrix under which the induced invariant copula can be written as the product of a left invertible copula and a right invertible copula.

\section{Background on copulas and Markov operators} \label{section:background}
Let $\lambda$ denote the Lebesgue measure on $\R$, $I \equiv [0,1]$ and $\mathscr{B} \equiv \mathscr{B}(I)$ the Borel $\sigma$-algebra on $I$.   
Since we always consider $\lambda$-integrable functions on $I$ that are measurable with respect to various sub-$\sigma$-algebras $\mathscr{M}$ of $\mathscr{B}$, we will denote by $L^1(\mathscr{M})$ 
the class of $\lambda$-integrable $\mathscr{M}$-measurable functions on $I$. 
$\mathscr{B}$-measurable functions are called \emph{Borel functions}.
 For $A\in \mathscr{B}$, $\one_A$ denotes the indicator function of $A$ and $\one \equiv \one_I$.

A (bivariate) \emph{copula} $C$ is a function from $I^2$ to $I$ which is the joint distribution function of two random variables uniformly distributed on $[0,1]$.  
For random variables $X,Y$ with joint distribution $F_{X,Y}$ and continuous marginal distributions $F_X$ and $F_Y$, there exists, by the Sklar's theorem, a unique copula $C$, called the \emph{copula of $X$ and $Y$}, for which $F_{X,Y}(x,y) = C(F_X(x),G_Y(y))$ for all $x,y$.  
The \emph{independence copula} is the product copula $\Pi(u,v) = uv$.  
\emph{Complete dependence copulas} are either the copulas $C_{ef}\equiv C_{e,f}$ or $C_{fe}\equiv C_{f,e}$ where $e(x) = x$ and $f$ is a measure-preserving function on $I$ in the sense that $\lambda(f^{-1}(B)) = \lambda(B)$ for all $B\in\mathscr{B}$.  
Here, $C_{f,g}(u,v) = \lambda(f^{-1}([0,u])\cap g^{-1}([0,v]))$ for $u,v \in I$.  
The \emph{comonotonic} and \emph{countermonotonic} copulas are $M = C_{e,e}$ and $W = C_{e,1-e}$, respectively.  
Each copula $C$ induces a \emph{doubly stochastic measure} $\mu_C$ by $\mu_C((a,b]\times (c,d]) = C(b,d) - C(b,c) - C(a,d) + C(a,c)$.  
The \emph{support} of $C$ is then defined as the support of the induced measure $\mu_C$, i.e.~the complement of the union of all open sets having zero $\mu_C$-measure.
One can construct a new copula by taking any convex combinations of two copulas.
Any two copulas $C,D$ also give rise to a new copula via the \emph{$*$-product}: $(C*D)(u,v) = \int_0^1 \partial_2C(u,t) \partial_1 D(t,v) \,dt$.  
The binary operation $*$ makes the class of copulas a monoid with null element $\Pi$ and identity $M$.
If $C*D = M$ then $C$ is a left inverse of $D$ and $D$ is a right inverse of $C$. 
The left invertible copulas are exactly the complete dependence copulas $C_{ef}$, while the right invertible copulas are exactly the complete dependence copulas $C_{fe}$.  
See \cite{Nel2006-Introduction, DurSem2015-Principles} for comprehensive introductions to many aspects of copulas.

A linear operator $T$ on $L^1(\mathscr{B})$ is called a \emph{Markov operator} if 
\begin{enumerate} 
\renewcommand{\theenumi}{\roman{enumi}}
  \item $T\mathds{1}=\mathds{1}$, 
  \item \label{ii} $\int_0^1 T\psi\,d\lambda = \int_0^1 \psi\,d\lambda$ for all $\psi\in L^1$, which is equivalent to $T^*\mathds{1} = \mathds{1}$, and
  \item $T\psi \geq 0$ for all $\psi \geq 0$, which means $T$ is positive.
\end{enumerate}
So a Markov operator must be a bounded linear operator on $L^1$ (and $L^\infty$). 
From \cite{OlsDarNgu1996-Copulas}, for each copula $C$, there corresponds a Markov operator $T_C$ defined by
\[(T_C\psi)(x) = \dfrac{d}{dx} \int_0^1\partial_2C(x,t)\psi(t)\,dt.\]
In fact, the mapping $\Phi\colon C\mapsto T_C$ is an isomorphism 
from the set of copulas endowed with the $*$-product onto the set of Markov operators 
under the composition.  
In particular,  $T_{C*D} = T_C\circ T_D$. 
Define also $(T_C^*\varphi)(y) = \frac{d}{dy} \int_0^1\partial_1C(t,y)\varphi(t)\,dt.$
The copula $C$ and the Markov operators $T_C$ and $T_C^*$ are also related by the identities
\begin{equation}
 C(x,y) = \int_0^x T_C\mathds{1}_{[0,y]} (t)\,dt \quad\text{and}\quad C(x,y) = \int_0^y T_C^*\mathds{1}_{[0,x]} (t)\,dt.
\end{equation} 
In fact, it is a good exercise in functional analysis to verify that the Markov operator $T_C^*$ coincides with the extension of the adjoint of $T_C$ to a Markov operator on $L^1$, i.e.~$T_C^* = (T_C)^*$.  
%

Let us quote a very useful result from \cite[Theorem 2.11]{DarOls2010-Characterization} where, for brevity, we denote $T_{C_{f,g}} = T_{f,g}$.
\begin{thm} \label{thm:2.11DO2010}
 Let $f$ be a measure-preserving Borel function and $\psi \in L^1(\mathscr{B})$. Then $[T_{ef} \psi](x) = \psi \circ f (x)$ and $[T_{fe}(\psi\circ f)](x) = \psi(x)$ for almost all $x \in [0,1]$. 
\end{thm}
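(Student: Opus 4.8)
The plan is to derive both formulas from the single representation $C(x,y)=\int_0^x T_C\one_{[0,y]}(t)\,dt$ quoted in the excerpt, together with its routine extension
\[
\mu_C\bigl([0,x]\times A\bigr)=\int_0^x T_C\one_A(t)\,dt \qquad (x\in I,\ A\in\mathscr{B}),
\]
which follows from that representation by a monotone/Dynkin‑class argument over the $\pi$-system $\{[0,y]:y\in I\}$ (both sides are finite measures in $A$ of total mass $x$ that agree on every $[0,y]$). I would first prove each claim for the building blocks $\psi=\one_{[0,y]}$ and then extend by linearity and $L^1$-density. Two standing remarks make the extension legitimate: since $f$ is measure-preserving, $\psi\mapsto\psi\circ f$ is a well-defined linear isometry of $L^1(\mathscr{B})$ (the null set of $\psi\circ f$ is the $f$-preimage of the null set of $\psi$), and $T_{ef},T_{fe}$ are bounded on $L^1$ because they are Markov operators.

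For the first identity I would set $C=C_{ef}=C_{e,f}$. By definition $C_{e,f}(x,y)=\lambda\bigl([0,x]\cap f^{-1}([0,y])\bigr)=\int_0^x\one_{[0,y]}\bigl(f(t)\bigr)\,dt$, while the representation gives $C_{ef}(x,y)=\int_0^x T_{ef}\one_{[0,y]}(t)\,dt$. Hence the $L^1$ functions $T_{ef}\one_{[0,y]}$ and $\one_{[0,y]}\circ f$ have the same integral over every $[0,x]$, so they agree $\lambda$-a.e.\ (uniqueness of the indefinite integral). Taking linear combinations, $T_{ef}s=s\circ f$ a.e.\ for every step function $s$; since step functions are dense in $L^1$ and both $T_{ef}$ and $\cdot\circ f$ are bounded on $L^1$, this upgrades to $T_{ef}\psi=\psi\circ f$ a.e.\ for every $\psi\in L^1(\mathscr{B})$.

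For the second identity I would set $C=C_{fe}=C_{f,e}$ and use that $\mu_{C_{f,e}}$ is the push-forward of $\lambda$ under $t\mapsto(f(t),t)$. With $A=f^{-1}([0,y])$ this yields
\[
\int_0^x T_{fe}\one_A(t)\,dt=\mu_{C_{f,e}}\bigl([0,x]\times A\bigr)=\lambda\bigl(f^{-1}([0,\min\{x,y\}])\bigr)=\min\{x,y\}=\int_0^x\one_{[0,y]}(t)\,dt,
\]
where the middle equalities use that $f$ is measure-preserving. As before this forces $T_{fe}(\one_{[0,y]}\circ f)=T_{fe}\one_A=\one_{[0,y]}$ a.e. The operator $\psi\mapsto T_{fe}(\psi\circ f)$ is bounded on $L^1$ (a composition of bounded operators) and agrees with the identity on step functions, hence it is the identity, i.e.\ $T_{fe}(\psi\circ f)=\psi$ a.e.\ for all $\psi\in L^1(\mathscr{B})$. (Alternatively one can argue by duality, using $T_{fe}^{*}=T_{(C_{f,e})^{t}}=T_{C_{e,f}}=T_{ef}$ together with the first identity: $\langle T_{fe}(\psi\circ f),\one_{[0,x]}\rangle=\langle\psi\circ f,\one_{[0,x]}\circ f\rangle=\langle\psi,\one_{[0,x]}\rangle$ for every $x$, whence the claim.)

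The mathematical content is slight, so I do not expect a serious obstacle; the care is all in the bookkeeping with null sets and the composition $\psi\circ f$. The only steps that are more than unwinding definitions are the two passages from equality of indefinite integrals to equality a.e., and the two limiting arguments — all of which rest on $f$ being measure-preserving (so that $\cdot\circ f$ is a well-defined $L^1$-isometry) and on Markov operators being $L^1$-contractions. The mildly non-trivial point is the extension of $\mu_C([0,x]\times A)=\int_0^x T_C\one_A\,dt$ to a general Borel set $A$, which is needed in the second identity because $f^{-1}([0,y])$ need not be an interval; this is the one place where a short Dynkin-class lemma from the interval case is invoked.
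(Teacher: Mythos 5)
Your proof is correct; but note that the paper itself does not prove this statement at all --- it is imported verbatim as Theorem 2.11 of Darsow and Olsen \cite{DarOls2010-Characterization}, so there is no internal proof to compare against. What you have produced is a legitimate self-contained derivation from material the paper does state: the representation $C(x,y)=\int_0^x T_C\one_{[0,y]}(t)\,dt$, the definition $C_{f,g}(u,v)=\lambda\paren{f^{-1}([0,u])\cap g^{-1}([0,v])}$, and the boundedness of Markov operators on $L^1$ (in fact they are contractions, by positivity together with $\int T\psi\,d\lambda=\int\psi\,d\lambda$, and $\psi\mapsto\psi\circ f$ is an $L^1$-isometry because $f$ is measure-preserving, so the density upgrade from step functions is sound). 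The individual steps all check out: equality of indefinite integrals gives $T_{ef}\one_{[0,y]}=\one_{[0,y]}\circ f$ a.e.; the extension of $\mu_C\paren{[0,x]\times A}=\int_0^x T_C\one_A\,d\lambda$ to Borel $A$ is a routine uniqueness-of-measures argument on the $\pi$-system $\set{[0,y]}$, with countable additivity of the right-hand side coming from positivity and $L^1$-continuity of $T_C$; and $\mu_{C_{f,e}}$ is indeed the push-forward of $\lambda$ under $t\mapsto(f(t),t)$ by the same $\pi$-system comparison. Your parenthetical duality route for the second identity also works, resting on $T^*_C=T_{C^t}$ and $(C_{f,e})^t=C_{e,f}$, both recorded in the paper. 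So the argument is acceptable as a proof of the quoted theorem; whether it mirrors Darsow and Olsen's original argument cannot be judged from this manuscript, which supplies no proof to mirror.
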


%
%
\deletedProof{ 
For any Markov operator $T\colon L^1\to L^1$, \ref{ii}.~implies that its adjoint operator $T^*\colon L^\infty\to L^\infty$ 
[for $\varphi \in L^\infty = (L^1)^*$, $(T^*\varphi)(\psi) = \varphi(T\psi)$, i.e.~$\int_0^1 (T^*\varphi)\psi\,d\lambda =  \int_0^1 \varphi (T\psi)\,d\lambda$.]
satisfies i.) $T^*\one = \one$ and that 
ii.) $\int_0^1 T^*\varphi \,d\lambda = \int_0^1 \varphi (T\one) \,d\lambda =  \int_0^1 \varphi \,d\lambda$ for all $\varphi \in L^\infty$.  It also follows that for any $\varphi \in L^\infty_+$ 
and $\psi \in L^1_+$, 
$\int_0^1 \paren{T^*\varphi}\psi \,d\lambda = \int_0^1 \varphi \paren{T\psi} \,d\lambda \geq 0$, which implies that iii.) $T^*\varphi \geq 0$. 
Since $L^\infty$ is a dense subspace of $L^1$, $T^*$ can be extended to a linear operator on $L^1$. 
[For each $f\in L^1$, there exists an increasing sequence $(f_n)$ in $L^\infty$ which converges to $f$ in $L^1$.  
So, by ii., $\paren{T^*f_n}$ is a Cauchy sequence in $L^1$ and hence converges to an integrable function, denoted by $T^*f$. 
Moreover, $T^*f$ is independent of the choice of $f_n$ because if $\tilde{f}_n\nearrow f$ in $L^1$ and $T^*\tilde{f}_n$ converges in $L^1$ to another integrable function $g$ then $\norm{T^*f - g}_1 \leq \norm{T^*f - T^* f_n}_1 + \norm{T^*f_n -T^*\tilde{f}_m}_1 + \norm{T^*\tilde{f}_m - g}_1$ which goes to $0$ as $n,m \to \infty$ because 
\[\int_0^1 \abs{T^* f_n - T^* \tilde{f}_m} \,d\lambda \leq \int_0^1 T^*\abs{f_n - \tilde{f}_m} \,d\lambda = \int_0^1 \abs{f_n - \tilde{f}_m} \,d\lambda.\footnote{$h = h^+ - h^- \in L^\infty$ $\Rightarrow$ $|T^*h| = |T^*h^+ - T^*h^-| \leq T^*h^+ + T^*h^- = T^* |h|$.}] \]
It's clear from the continuous extension that $\int_0^1 T^*f \,d\lambda = \int_0^1 f \,d\lambda$ for $f\in L^1$ and that $T^*f\geq 0$ for all $0\leq f \in L^1$.  Hence, $T^*$ can be extended to a Markov operator on $L^1$.

Next, we have to show that $(T_C)^* = T^*_C$:
\[\int_0^1 \psi(y) (T^*_C\varphi)(y) \,dy = \int_0^1 \psi(y) \dfrac{d}{dy} \int_0^1\partial_1C(t,y)\varphi(t)\,dt \,dy = \]
}            

\section{Associated $\sigma$-algebras} \label{section:sigma-algebras}
Unless stated otherwise, all equalities of two functions hold $\lambda$-almost everywhere and all equalities of two sets mean that their symmetric difference has Lebesgue measure zero. The integral on the whole unit interval $I$ is denoted simply by $\int$.

Let $C$ be the copula of 
random variables $X$ and $Y$ which are uniformly distributed on $[0,1]$. 
Then for any Borel sets $R,S \subseteq [0,1]$, we have 
\[T_C\mathds{1}_S(x) = P(Y\in S| X=x)\quad\text{and}\quad T^*_C\mathds{1}_R(y) = P(X\in R| Y=y).\]
They are  called \emph{transition probabilities}.  See \cite{DarNguOls1992-Copulas} 
\draftnote{Pf of 2nd eq?  Easy for abs.cont.~$C$ (by def.~of adj.)} 
We also have $T^*_C = T_{C^t}$ and  $\paren{T^{*}_C}^* = T_{C}$.  \draftnote{Proof?} 
Roughly speaking, if $T_C\mathds{1}_S = \one_R$, then it happens with probability one that if $X \in R$ then $Y\in S$.
For each copula $C$ or Markov operator $T = T_C$, let us define 
\begin{align*}
  \sigma_C = \sigma_T &\equiv \set{S\in\mathscr{B}\colon T\mathds{1}_S = \mathds{1}_R \text{ for some }R}\;\text{ and }\\
  \sigma^*_C = \sigma^*_T &\equiv \set{R\in\mathscr{B}\colon T\mathds{1}_S = \mathds{1}_R \text{ for some }S}.
\end{align*}

\begin{exam} \label{exam:sigma}
Let us explicitly compute $T_C$ for $C = \Pi$, $C=$ some complete dependence copulas and $C = \frac{M+W}{2}$.
\begin{enumerate}
  \item $T_\Pi\one_S = \one_R$ is equivalent to $\displaystyle \mathds{1}_R(x) = \dfrac{d}{dx} \int_S\partial_2\Pi(x,t)\,dt = \lambda(S)$ for a.e.~$x\in [0,1].$
  So $\lambda(S) = 0$ or $1$ and hence $\lambda(R)=\lambda(S)=0$ or $1$. Thus, $\sigma_\Pi = \sigma^*_\Pi = \set{S\in\mathscr{B}\colon \lambda(S) = 0 \text{ or } 1}$.  
  
  \item  With essentially the same arguments as that in \cite{DarOls2010-Characterization}, $\sigma_M$, $\sigma^*_M$, $\sigma_W$ and $\sigma^*_W$ are the Borel sets.
  
  \item  For $C=\frac{M+W}{2}$, $T_C\one_S(x) = \frac{1}{2}\mathds{1}_S(x) + \frac{1}{2}\mathds{1}_S(1-x)$ a.e.~$x$.  If $S$ is symmetric with respect to $\frac{1}{2}$, i.e.~$x\in S$ if and only if $1-x \in S$, then $T_C\one_S = \one_S$.  Conversely, $T_C\one_S(x) = \mathds{1}_R(x)$ implies that $x\in S$ if and only if $1-x\in S$. 
Moreover, for such a symmetric set $S$, $(\one_R - \one_S)(x) = (T_C\one_S - \one_S)(x) = \frac{1}{2}\paren{\one_S(1-x) - \one_S(x)} = 0$ a.e.~$x$.  That is, $R=S$.  Hence, $\sigma_{\frac{M+W}{2}} = \sigma^*_{\frac{M+W}{2}} = \set{S\subseteq [0,1]\colon x\in S \Leftrightarrow 1-x \in S}$. 
%
  
  \item For $0<\alpha<1$, let $L_\alpha$ denote the complete dependence copula whose support consists of the line segments $y = \frac{x}{\alpha}$, $0\leq x\leq \alpha$, and $y = \frac{x-\alpha}{1-\alpha}$, $\alpha < x\leq 1$.  Then a direct computation yields $T_{L_\alpha} \one_S = \one_{(\alpha S) \cup (\alpha + (1-\alpha)S)}$ and hence  $\sigma_{L_\alpha} = \mathscr{B}$ and $\sigma_{L_\alpha}^* = \set{ \alpha S \cup \paren{\alpha + (1-\alpha)S}\colon S\in\mathscr{B}}$.
\end{enumerate}
\end{exam}

Listed below 
 are basic properties of sets $S$ and $R$ linked by $T$.
\begin{prop} \label{prop:sigma} 
  Let $T$ be a Markov operator and $S_1,S_2,R_1,R_2 \in \mathscr{B}$.  Then
  \begin{enumerate}
    \item \label{T**=T} if $T\mathds{1}_{S_1} = \mathds{1}_{R_1}$ then $T^*\one_{R_1} = \one_{S_1}$;
  \draftnote{
  $T^{**}=T$?}
    \item \label{sigmaminus} if $T\one_{S_1} = \one_{R_1}$ then $T\one_{S_1^c} = \one_{R_1^c}$;  
    \item \label{sigmacap} if $T\one_{S_i} = \one_{R_i}$ for $i=1,2$, then $T\one_{S_1\cap S_2} = \one_{R_1\cap R_2}$; 
    \item \label{sigma} the classes $\sigma_T$ and $\sigma^*_T$ are $\sigma$-algebras; and   
    \item \label{sigma*} $\sigma^*_T = \sigma_{T^*}$, that is $\sigma^*_C = \sigma_{C^t}$. \draftnote{Proof?} 
  \end{enumerate} 
\end{prop}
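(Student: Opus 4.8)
The plan is to establish the five items in the listed order, since every one of them except (1) is either a one-line consequence of linearity and positivity or a routine limiting argument, while (1) carries the actual content. Throughout I use that the Markov operator $T^*$, restricted to $L^\infty$, is the Banach-space adjoint of $T$, so that $\int(T\psi)\varphi\,d\lambda=\int\psi\,(T^*\varphi)\,d\lambda$ for all $\psi\in L^1$ and $\varphi\in L^\infty$, in particular for indicator functions in either slot. For (1), assume $T\one_{S_1}=\one_{R_1}$. Pairing with $\one_{S_1}$ gives $\int\one_{S_1}(T^*\one_{R_1})\,d\lambda=\int(T\one_{S_1})\one_{R_1}\,d\lambda=\int\one_{R_1}\,d\lambda=\lambda(R_1)$, while integral-preservation of $T^*$ gives $\int T^*\one_{R_1}\,d\lambda=\lambda(R_1)$; subtracting and using $T^*\one_{R_1}\geq0$ shows $T^*\one_{R_1}=0$ a.e.\ off $S_1$. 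Since $T^*$ is positive and $T^*\one=\one$, also $0\leq T^*\one_{R_1}\leq\one$. Finally $\lambda(S_1)=\int(T\one_{S_1})\,d\lambda=\int\one_{R_1}\,d\lambda=\lambda(R_1)=\int_{S_1}T^*\one_{R_1}\,d\lambda$, so the nonnegative function $\one_{S_1}-T^*\one_{R_1}$ has vanishing integral; hence $T^*\one_{R_1}=\one_{S_1}$.

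Item (2) is just linearity: $T\one_{S_1^c}=T(\one-\one_{S_1})=\one-\one_{R_1}=\one_{R_1^c}$. For (3), positivity yields $0\leq T\one_{S_1\cap S_2}\leq T\one_{S_i}=\one_{R_i}$ for $i=1,2$, hence $T\one_{S_1\cap S_2}\leq\one_{R_1\cap R_2}$, so this function vanishes off $R_1\cap R_2$ and takes values in $[0,1]$. Applying $T$ to $\one_{S_1}=\one_{S_1\cap S_2}+\one_{S_1\cap S_2^c}$ gives $\one_{R_1}=T\one_{S_1\cap S_2}+T\one_{S_1\cap S_2^c}$, where by (2) and positivity $0\leq T\one_{S_1\cap S_2^c}\leq T\one_{S_2^c}=\one_{R_2^c}$; reading this identity a.e.\ on $R_1\cap R_2$ kills the last term, so $T\one_{S_1\cap S_2}=1$ there, and together with the earlier bound $T\one_{S_1\cap S_2}=\one_{R_1\cap R_2}$.

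For (4), $T\one_\emptyset=0=\one_\emptyset$ and $T\one_I=\one=\one_I$, while (2)--(3) give closure under complementation and finite intersection, hence finite union. Given $\{S_n\}\subseteq\sigma_T$ with $T\one_{S_n}=\one_{R_n}$, replace $S_n$ by $S_1\cup\cdots\cup S_n$ to assume $S_n\nearrow$; positivity then forces $R_n\nearrow$, and with $S=\bigcup_nS_n$, $R=\bigcup_nR_n$ we have $\one_{S_n}\to\one_S$ and $\one_{R_n}\to\one_R$ in $L^1$, so boundedness of $T$ on $L^1$ gives $T\one_S=\lim_nT\one_{S_n}=\lim_n\one_{R_n}=\one_R$, whence $S\in\sigma_T$. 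Hence $\sigma_T$ is a $\sigma$-algebra, and applying the same argument to $T^*$ (or invoking (5)) handles $\sigma^*_T$. For (5): if $R\in\sigma^*_T$, choose $S$ with $T\one_S=\one_R$; then (1) gives $T^*\one_R=\one_S$, so $R\in\sigma_{T^*}$. Conversely, if $R\in\sigma_{T^*}$, choose $S$ with $T^*\one_R=\one_S$; applying (1) to the Markov operator $T^*$ and using $(T^*)^*=T$ yields $T\one_S=\one_R$, so $R\in\sigma^*_T$. The restatement $\sigma^*_C=\sigma_{C^t}$ is then immediate from $T^*_C=T_{C^t}$ and the definition $\sigma_D=\sigma_{T_D}$.

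I expect (1) to be the main obstacle: it is precisely the interplay of $T$ with its adjoint — positivity of both together with integral-preservation — that upgrades "supported in $S_1$" and "total mass $\lambda(S_1)$" to the exact identity $T^*\one_{R_1}=\one_{S_1}$, and this identity is what makes the rest (especially (5), hence the well-definedness of the $\sigma$-algebra pairing) fall into place. The only other nontrivial point is the countable step in (4), which is the standard reduction to a monotone sequence plus continuity of $T$ on $L^1$.
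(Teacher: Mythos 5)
Your proof is correct and, in its essentials, follows the paper's own route: item (1) is established by the identical adjoint-pairing argument (pair $T^*\one_{R_1}$ against $\one_{S_1}$, use integral preservation of $T$ and $T^*$, and squeeze with $0\le T^*\one_{R_1}\le 1$), item (2) is the same one-liner, and item (5) rests, exactly as in the paper, on (1) together with the assumed fact $(T^*)^*=T$ and $T^*_C=T_{C^t}$. The only deviations are in technique, not substance: for (3) the paper gets $\lambda(R_1\cap R_2)=\lambda(S_1\cap S_2)$ from the adjoint identity $\int \one_{S_1}\,T^*\one_{R_2}\,d\lambda=\int (T\one_{S_1})\,\one_{R_2}\,d\lambda$ and then integrates the nonnegative function $\one_{R_1\cap R_2}-T\one_{S_1\cap S_2}$, whereas you avoid the adjoint by splitting $\one_{S_1}=\one_{S_1\cap S_2}+\one_{S_1\cap S_2^c}$ and reading the resulting identity a.e.\ on $R_1\cap R_2$; for (4) the paper disjointifies the sequence and interchanges $T$ with the resulting series, while you pass to increasing finite unions and invoke $L^1$-continuity of $T$ on monotone limits. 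Both variants are valid and of equal depth, so the choice between them is purely a matter of taste.
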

\begin{proof}
\textit{\ref{T**=T}}: By the definition of $T^*\one_R$ applied to $\one_S$ with the canonical identification between $\paren{L^1}^*$ and $L^\infty$, 
$ \int_S T^*\one_R\,d\lambda = \int \one_S T^*\one_R\,d\lambda = \int \one_R \one_R \,d\lambda = \lambda(R) = \int_S \one_S \,d\lambda$.  
Since $0\leq T^*\one_R \leq 1$ (a.e.), $T^*\one_R = \one_S$ as desired.
\textit{\ref{sigmaminus}}: If $T\mathds{1}_S = \mathds{1}_R$ then it follows from 
  $T\mathds{1}_S + T\mathds{1}_{S^c} = T(\mathds{1}_S + \mathds{1}_{S^c}) = T\mathds{1} = \mathds{1} = \mathds{1}_R + \mathds{1}_{R^c}$
that $T\mathds{1}_{S^c}=\mathds{1}_{R^c}$. 
%
 \textit{\ref{sigmacap}}: Suppose $T\mathds{1}_{S_i} = \mathds{1}_{R_i}$ for $i=1,2$ and let $S=S_1\cap S_2$, $R=R_1\cap R_2$.  
  Since $S\subseteq S_i$, $\mathds{1}_{S_i}-\mathds{1}_S \geq 0$ and so $\mathds{1}_{R_i}-T\mathds{1}_S = T(\mathds{1}_{S_i}-\mathds{1}_S) \geq 0.$  Therefore $T\mathds{1}_S \leq \min(\mathds{1}_{R_1},\mathds{1}_{R_2}) = \mathds{1}_{R_1\cap R_2} = \mathds{1}_R$.  But $\int \paren{\mathds{1}_R - T\mathds{1}_S}\,d\lambda = \int \mathds{1}_R \,d\lambda - \int \mathds{1}_S \,d\lambda = \lambda(R) - \lambda(S) = 0,$ where the last equality follows from considering $T^*\one_{R_2}(\one_{S_1}) = \one_{R_2}(T\one_{S_1})$. 
  Hence, $T\mathds{1}_S = \mathds{1}_R$.
 \textit{\ref{sigma}}:  Suppose $T\one_{S_i} = \one_{R_i}$ for all $i\in\N$.  
  By \ref{sigmacap}., if $S_1, S_2, \dots$ are mutually disjoint 
  then so is the sequence $R_1, R_2, \dots$ because 
  $\sum_{i=1}^\infty \mathds{1}_{R_i} = \sum_{i=1}^\infty T\mathds{1}_{S_i} = T\paren{\sum_{i=1}^\infty \mathds{1}_{S_i}} = T\paren{\mathds{1}_{\dot{\cup}_i S_i}} \leq \mathds{1}.$  
Thus, $T\paren{\mathds{1}_{\dot{\cup}_i S_i}} = \one_{\dot{\cup}_i R_i}$.
  Generally, we write $\bigcup_i S_i$ as the disjoint union $\cup_i \tilde{S}_i$, where $\tilde{S}_i = S_i\setminus \cup_{j<i} \tilde{S}_j$.  Letting $T\one_{\tilde{S}_i} = \one_{\tilde{R}_i}$, it follows from \ref{sigmaminus}., \ref{sigmacap}.~and the disjoint union case above that $T\mathds{1}_{S_i\setminus \cup_{j<i} \tilde{S}_j} = \one_{R_i\setminus \cup_{j<i} \tilde{R}_j}$.  Hence, $\tilde{R}_i = R_i\setminus \cup_{j<i} \tilde{R}_j$ by induction.  Consequently, $T\paren{\mathds{1}_{\cup_i S_i}} = \one_{\cup_i R_i}$.
 \textit{\ref{sigma*}}:  This clearly follows from \ref{T**=T}.~and the fact that $T^{**}=T$. \draftnote{Really?}
\end{proof}
\begin{remk}
  By Theorem \ref{thm:2.11DO2010}, $T_L^*\circ T_L = I$, the identity map, on $L^1(\mathscr{B})$ if the copula $L$ is left invertible.  However, it follows from the above Proposition that for \emph{all copula} $C$, $T_C^*\circ T_C = I$  on $\set{\one_S\colon S\in\sigma(C)}$, or equivalently on the class of $\sigma(C)$-measurable functions.  
\end{remk}


%
%
%

\begin{thm}
  Let $T$ be a Markov operator with associated $\sigma$-algebras $\sigma_T$ and $\sigma^*_T$.  
  If $\psi$ is $\sigma_T$-measurable then $T\psi$ is $\sigma^*_T$-measurable. \draftnote{Converse?}
\end{thm}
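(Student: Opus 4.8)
The plan is to bootstrap from indicators to simple functions to general integrable functions, relying on two facts already in hand: $\sigma_T$ and $\sigma^*_T$ are honest $\sigma$-algebras (Proposition~\ref{prop:sigma}\ref{sigma}), and $T$ is a bounded — in fact contractive — linear operator on $L^1$.

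First I would dispose of the base case. If $S\in\sigma_T$ then by definition $T\one_S=\one_R$ for some $R$, and this $R$ is by definition an element of $\sigma^*_T$; hence $T\one_S$ is $\sigma^*_T$-measurable. Next, for a simple function $\psi=\sum_{i=1}^n a_i\one_{S_i}$ with each $S_i\in\sigma_T$ — and, since $\sigma_T$ is a $\sigma$-algebra, we may assume the $S_i$ partition $I$ — write $T\one_{S_i}=\one_{R_i}$ with $R_i\in\sigma^*_T$. By Proposition~\ref{prop:sigma}\ref{sigmacap} and~\ref{sigma} the $R_i$ again partition $I$, so linearity of $T$ gives $T\psi=\sum_{i=1}^n a_i\one_{R_i}$, which is $\sigma^*_T$-measurable.

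For general $\psi\in L^1(\sigma_T)$ I would approximate: since $\lambda$ is finite, there are $\sigma_T$-measurable simple functions $\psi_n\to\psi$ in $L^1$; boundedness of $T$ (which follows from $\int\abs{T\varphi}\,d\lambda\le\int T\abs{\varphi}\,d\lambda=\int\abs{\varphi}\,d\lambda$) yields $T\psi_n\to T\psi$ in $L^1$, with each $T\psi_n$ being $\sigma^*_T$-measurable by the previous step. Passing to a subsequence converging $\lambda$-almost everywhere, the function $\limsup_n T\psi_n$ is a genuinely $\sigma^*_T$-measurable representative of $T\psi$, so $T\psi$ is $\sigma^*_T$-measurable as claimed. (A functional monotone class argument would do the job equally well, but still through a limiting step of the same kind.)

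The only point requiring care — and the nearest thing to an obstacle — is the bookkeeping modulo $\lambda$-null sets: "$\sigma_T$-measurable'' and "$\sigma^*_T$-measurable'' must be read as "a.e.\ equal to a set-theoretically measurable function,'' and it is precisely in converting $L^1$-convergence of the $T\psi_n$ into a.e.\ convergence along a subsequence, and then invoking the $\sigma$-algebra structure of $\sigma^*_T$ to see that the a.e.\ limit of $\sigma^*_T$-measurable functions is again $\sigma^*_T$-measurable, that one pins this down. Everything else is routine.
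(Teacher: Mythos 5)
Your argument is correct and follows the same route as the paper's proof: indicators, then simple $\sigma_T$-measurable functions by linearity, then general $\psi$ by $L^1$-continuity of $T$. You merely spell out what the paper leaves implicit (the $L^1$-contraction bound, the a.e.\ convergent subsequence, and the bookkeeping modulo null sets), which is sound but not a different method.
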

\begin{proof}
  By the linearity of $T$ and the definition of $\sigma_T$ and $\sigma^*_T$, if $\psi$ is a simple $\sigma_T$-measurable function then $T\psi$ is simple and $\sigma^*_T$-measurable.  The case when $\psi$ is $\sigma_T$-measurable follows from the continuity of $T$.
\end{proof}

%
%
%

\section{Non-atomic copulas} \label{section:non-atomic}
\begin{defn}
  Let $\mathscr{S}$ be a sub-$\sigma$-algebra of $\mathscr{B}$.
  A set $S$ in $\mathscr{S}$ is called an \emph{atom} in $\mathscr{S}$ if $\lambda(S) > 0$ and for all $E\in \mathscr{S}$, either $\lambda(S\cap E) = \lambda(S)$ or $\lambda(S\cap E) = 0$.  The $\sigma$-algebra $\mathscr{S}$ is said to be \emph{non-atomic} if there are no atoms in $\mathscr{S}$; otherwise, it is called \emph{atomic}.  $\mathscr{S}$ is \emph{totally atomic} if there is a (countable) collection of essentially disjoint atoms $E_1, E_2, ...$ in $\mathscr{S}$ such that $\sum_i \lambda(E_i) = 1$. 
We say that a bivariate copula $C$ is \emph{non-atomic} if both $\sigma$-algebras $\sigma_C$ and $\sigma^*_C$ are non-atomic. 
And $C$ is called \emph{totally atomic} if both $\sigma_C$ and $\sigma^*_C$ are totally atomic. 
\end{defn}
Note that the non-atomicity is a generalization of the notion of the same name in \cite{DarOls2010-Characterization}, which is defined only for idempotent copulas $C$ via their invariant sets defined as Borel sets $S$ for which $T_C\one_S = \one_S$. However, the two notions agree for idempotent copulas.
\begin{prop}
  If a copula $C$ is non-atomic and idempotent then $\sigma_C = \sigma_C^*$ is the $\sigma$-algebra of invariant sets.
\end{prop}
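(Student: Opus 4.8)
The plan is to work with the Markov operator $T \equiv T_C$ and its adjoint $T^{*} \equiv T_C^{*}$, and to establish the four inclusions relating $\sigma_C$, $\sigma_C^{*}$ and the $\sigma$-algebra of invariant sets $\mathscr{I} \equiv \{S \in \mathscr{B} \colon T\one_S = \one_S\}$. The preliminary observation is that idempotency of $C$, i.e.~$C * C = C$, is equivalent, via the isomorphism $\Phi \colon C \mapsto T_C$ (using $T_{C*C} = T_C \circ T_C$), to $T^{2} = T$; hence $(T^{*})^{2} = (T^{2})^{*} = T^{*}$ as well, and by Proposition~\ref{prop:sigma}(\ref{sigma*}) we have $\sigma_C^{*} = \sigma_{T^{*}}$. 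The inclusion $\mathscr{I} \subseteq \sigma_C \cap \sigma_C^{*}$ is then immediate: if $T\one_S = \one_S$, reading this relation as $T\one_S = \one_R$ with $R = S$ shows $S \in \sigma_C$, and reading it as $T\one_{S'} = \one_S$ with $S' = S$ shows $S \in \sigma_C^{*}$.

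For the reverse inclusion $\sigma_C \subseteq \mathscr{I}$, I would take $S \in \sigma_C$ and fix $R$ with $T\one_S = \one_R$. Applying $T$ and using $T^{2} = T$ gives $T\one_R = T^{2}\one_S = T\one_S = \one_R$, so $R$ is invariant. Now Proposition~\ref{prop:sigma}(\ref{T**=T}), applied to $T\one_S = \one_R$, gives $T^{*}\one_R = \one_S$, while applied to $T\one_R = \one_R$ it gives $T^{*}\one_R = \one_R$; comparing, $\one_S = \one_R$ a.e., i.e.~$S = R$, so $T\one_S = \one_S$ and $S \in \mathscr{I}$. The inclusion $\sigma_C^{*} \subseteq \mathscr{I}$ follows by running the symmetric argument for the (idempotent, Markov) operator $T^{*}$: given $S \in \sigma_{T^{*}}$, choose $S'$ with $T^{*}\one_{S'} = \one_S$; then idempotency of $T^{*}$ yields $T^{*}\one_S = (T^{*})^{2}\one_{S'} = T^{*}\one_{S'} = \one_S$, and Proposition~\ref{prop:sigma}(\ref{T**=T}) applied to $T^{*}$ (recall $(T^{*})^{*} = T$) turns $T^{*}\one_S = \one_S$ into $T\one_S = \one_S$, so $S \in \mathscr{I}$. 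Combining the four inclusions gives $\sigma_C = \sigma_C^{*} = \mathscr{I}$.

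I do not expect a genuine obstacle here: the whole argument rests on $T^{2} = T$ together with the adjoint bookkeeping already proved in Proposition~\ref{prop:sigma}. The two points meriting a moment's care are (a) deriving ``$C$ idempotent $\Rightarrow T_C$ idempotent'' through the monoid isomorphism rather than merely asserting it, and (b) invoking Proposition~\ref{prop:sigma}(\ref{T**=T}) with $T^{*}$ playing the role of $T$, which is legitimate because $T^{*}$ is itself a Markov operator with $(T^{*})^{*} = T$. Finally, it is worth noting that non-atomicity is never actually used, so in fact $\sigma_C = \sigma_C^{*} = \mathscr{I}$ holds for \emph{every} idempotent copula; this is exactly what justifies the earlier assertion that the present notion of non-atomicity coincides, for idempotent copulas, with that of \cite{DarOls2010-Characterization}.
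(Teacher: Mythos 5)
Your proof is correct, but it takes a genuinely different route from the paper's. The paper's argument rests on the known fact that idempotent copulas are symmetric, so that $T_C^* = T_{C^t} = T_C$ and hence $T_C^*\circ T_C = T_{C*C} = T_C$; then a single application of Proposition \ref{prop:sigma}\eqref{T**=T} to $T_C\one_S=\one_R$ gives $\one_S = T_C^*\one_R = T_C^*\circ T_C\one_S = T_C\one_S = \one_R$, so every set in $\sigma_C$ (and hence every set in $\sigma_C^*$) is invariant, the reverse inclusions being trivial. You never invoke symmetry: you use only $T^2=T$, first noting that $R$ is invariant ($T\one_R = T^2\one_S = \one_R$) and then applying Proposition \ref{prop:sigma}\eqref{T**=T} twice (to $T\one_S=\one_R$ and to $T\one_R=\one_R$) to force $S=R$. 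What your route buys is self-containedness and a bit more generality: you do not need the (nontrivial) symmetry theorem for idempotent copulas, and your argument works verbatim for any idempotent Markov operator, self-adjoint or not; the cost is a slightly longer bookkeeping step. Two small remarks: for the inclusion of $\sigma_C^*$ in the invariant sets you can bypass Proposition \ref{prop:sigma}\eqref{sigma*} entirely, since $S\in\sigma_C^*$ means $T\one_{S'}=\one_S$ for some $S'$, whence $T\one_S = T^2\one_{S'} = T\one_{S'} = \one_S$ directly; and your closing observation is accurate --- neither your proof nor the paper's uses non-atomicity, which appears in the statement only because the comparison with the notion of Darsow and Olsen is being made in the non-atomic setting.
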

\begin{proof}
 Since idempotent copula is symmetric, $T^*_C = T_{C^t} = T_C$.  
  Consequently, $T^*_C\circ T_C = T_C\circ T_C = T_{C*C} = T_C$.  Now, if $T_C\one_S = \one_R$ then $T^*_C\one_R = \one_S$ and so $T^*_C\circ T_C \one_S = \one_S$.  Hence $S$ is an invariant set of $C$ and so is $R$ as $\one_R = T_C\one_S = \one_S$.  That is, $\sigma_C$ and $\sigma_C^*$ are subsets of the class of invariant sets. The converse inclusions are clear.
\end{proof}

\begin{exam} \label{exam:atomic}
It is evident from the computations of $\sigma_C$ and $\sigma^*_C$ in Example \ref{exam:sigma} that $\Pi$ is totally atomic but $M, W, \frac{M+W}{2}$ and $L_\alpha$ are non-atomic.   In fact, every complete dependence copula is non-atomic.
%
%
%
\end{exam}


%



\begin{prop}
  Let $C$ be a copula.
  \begin{enumerate}
    \item \label{sigma-left} $\sigma_C = \mathscr{B}$ 
     if and only if 
    $C$ is left invertible. 
    \item \label{sigma-right} $\sigma^*_C = \mathscr{B}$ 
     if and only if 
    $C$ is right invertible. 
  \end{enumerate}
\end{prop}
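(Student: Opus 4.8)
The plan is to prove the first assertion directly and then obtain the second from it by transposition. In the first assertion one implication is immediate from Theorem \ref{thm:2.11DO2010}; the content is in recovering a measure-preserving function from the hypothesis $\sigma_C = \mathscr{B}$.

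\textbf{Second assertion from the first.} By part \ref{sigma*} of Proposition \ref{prop:sigma}, $\sigma^*_C = \sigma_{C^t}$, so $\sigma^*_C = \mathscr{B}$ is equivalent to $\sigma_{C^t} = \mathscr{B}$, hence by the first assertion to $C^t$ being left invertible, i.e.\ $C^t = C_{e,f}$ for some measure-preserving Borel $f$. A direct check of the definition of $C_{f,g}$ gives $(C_{e,f})^t = C_{f,e}$, so this is equivalent to $C = C_{f,e}$, that is, to $C$ being right invertible; the converse chain of equivalences is the same. (Here I use the characterizations, recalled in Section \ref{section:background}, of left and right invertible copulas as exactly the $C_{ef}$, respectively the $C_{fe}$.)

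\textbf{Easy direction of the first assertion.} If $C$ is left invertible, then $C = C_{e,f}$ with $f$ measure-preserving and Borel, so $T_C\psi = \psi\circ f$ by Theorem \ref{thm:2.11DO2010}. In particular $T_C\one_S = \one_S\circ f = \one_{f^{-1}(S)}$ for every $S \in \mathscr{B}$, which is an indicator of a Borel set; hence every $S$ lies in $\sigma_C$, i.e.\ $\sigma_C = \mathscr{B}$.

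\textbf{Hard direction of the first assertion.} Assume $\sigma_C = \mathscr{B}$. For each $t \in I$ choose $A_t \in \mathscr{B}$ with $T_C\one_{[0,t]} = \one_{A_t}$. Property \ref{ii} of Markov operators gives $\lambda(A_t) = t$, and part \ref{sigmacap} of Proposition \ref{prop:sigma}, applied to $[0,s] \subseteq [0,t]$, gives $A_s \subseteq A_t$ modulo $\lambda$ for $s \le t$. Over the countably many rational $t$ one may then fix honest set representatives with $A_0 = \emptyset$, $A_1 = I$ and $A_s \subseteq A_t$ for all rational $s < t$, and define the Borel function $f(x) = \inf\{t \in \Q \cap I : x \in A_t\}$. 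Using this monotonicity and continuity of measure, one verifies that $\{f \le y\} = A_y$ modulo $\lambda$ for every $y \in I$, not just rational $y$; hence $\lambda(f^{-1}([0,y])) = \lambda(A_y) = y$ for all $y$, so $f$ is measure-preserving. Finally, the integral representation from Section \ref{section:background} yields $C(x,y) = \int_0^x T_C\one_{[0,y]}(t)\,dt = \int_0^x \one_{A_y}(t)\,dt = \lambda([0,x] \cap \{f \le y\}) = C_{e,f}(x,y)$, so $C = C_{e,f}$ is left invertible.

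\textbf{Main obstacle.} The delicate step is the measure-theoretic bookkeeping that upgrades $\{f \le y\} = A_y$ from rational $y$ (where it follows from the definition of $f$) to all real $y$. Conceptually this is the fact that $S \mapsto A_S$, where $T_C\one_S = \one_{A_S}$, is a measure-preserving Boolean $\sigma$-homomorphism of the Lebesgue measure algebra of $I$ — equivalently, that a multiplicative Markov operator on $L^\infty(I)$ is a composition operator — and the $\sigma$-completeness of this homomorphism is exactly what parts \ref{sigmaminus}, \ref{sigmacap} and \ref{sigma} of Proposition \ref{prop:sigma} provide; one could alternatively organize the whole argument around the point-realization theorem for such homomorphisms on standard probability spaces.
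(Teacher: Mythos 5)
Your proof is correct, but for the nontrivial direction you take a genuinely different route from the paper. The paper disposes of the converse in one stroke by quoting Theorem 5 of Sakai--Shimogaki: a Markov operator is of the form $T_{eg}$ for a measure-preserving $g$ if and only if it is multiplicative on $L^\infty$, and multiplicativity is checked on indicators exactly as you would expect, since $\sigma_C=\mathscr{B}$ together with Proposition \ref{prop:sigma}\eqref{sigmacap} gives $T_C(\one_B\one_E)=\one_{B'\cap E'}=T_C\one_B\cdot T_C\one_E$; linearity and continuity finish it. You instead build the measure-preserving function by hand: from the nested family $A_t$ with $T_C\one_{[0,t]}=\one_{A_t}$, $\lambda(A_t)=t$, you choose monotone representatives over the rationals, set $f(x)=\inf\{t\in\Q\cap I: x\in A_t\}$, upgrade $\{f\le y\}=A_y$ (mod $\lambda$) to all real $y$ by continuity of measure, and then read off $C=C_{e,f}$ from the integral representation $C(x,y)=\int_0^x T_C\one_{[0,y]}\,d\lambda$ --- this is essentially the point-realization construction that the paper itself uses later in the proof of Lemma \ref{lem:equivclassmap}, specialized to the present situation, and your closing remark correctly identifies that your Proposition-\ref{prop:sigma}-based bookkeeping is what replaces the external multiplicativity theorem. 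The trade-off: the paper's argument is shorter but leans on an outside result (and on the equivalence ``multiplicative $\Leftrightarrow$ composition operator''), while yours is self-contained, makes the measure-preserving $f$ explicit, and in addition derives part \eqref{sigma-right} honestly from part \eqref{sigma-left} via $\sigma^*_C=\sigma_{C^t}$ and $(C_{e,f})^t=C_{f,e}$, where the paper only asserts that the second part ``can be proved in a similar manner.'' The only places needing care in your write-up --- choosing genuinely nested representatives $A_s\subseteq A_t$ over the rationals and the passage from rational to arbitrary $y$ --- are sketched but sound, being the same null-set surgery and continuity-of-measure argument used in Lemma \ref{lem:equivclassmap}.
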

\begin{proof}
We shall prove only \ref{sigma-left}.~as \ref{sigma-right}.~can be proved in a similar manner.  
If $C = C_{eg}$ then $T_C = T_{eg}$ maps $\psi$ to $\psi\circ g$. So for every $B\in\mathscr{B}$, $T_C\one_B = \one_B\circ g = \one_{g^{-1}(B)}$. Hence, $\sigma_C=\mathscr{B}$. 
Conversely, 
 Theorem 5 in 
  \cite{SakShi1972-Equimeasurability} implies in particular that $T$ is multiplicative, meaning $T(\psi\cdot\phi) = T\psi\cdot T\phi$ for all $\psi,\phi \in L^\infty$, if and only if $T = T_{eg}$ for some measure-preserving function $g$.  
  By the linearity and continuity of $T= T_C$, it then suffices to show that $T(\one_B\cdot\one_E) = T\one_B\cdot T\one_E$ for all $B,E \in\mathscr{B}$. In fact, if $T\one_B = \one_{B'}$ and $T\one_E = \one_{E'}$ then $T\one_{B\cap E} = \one_{B'\cap E'}$ (by Proposition \ref{prop:sigma}\eqref{sigmacap}) and the desired equality follows.
\end{proof}

\begin{lem} \label{lem:TmuAxB}
    Let $C$ be a copula with associated Markov operator $T_C$ and doubly stochastic measure $\mu_C$ and $R,S \subseteq \mathscr{B}$.  $T_C\one_S = \one_R$ if and only if $\mu_C(R\times S) = \lambda(R) = \lambda(S)$.
\end{lem}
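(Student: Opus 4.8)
The plan is to route everything through the elementary identity linking $\mu_C$ with $T_C$, namely
\[
\mu_C(A\times B) \;=\; \int_A (T_C\one_B)\,d\lambda \qquad\text{for all } A,B\in\mathscr{B},
\]
and then to read off both implications from the pointwise bounds $0\le T_C\one_S\le 1$ together with property~(ii) of a Markov operator.

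\emph{Step 1 (the rectangle identity).} First I would verify the displayed identity. For half-open intervals $A=(a,b]$ and $B=(c,d]$ it follows directly from $C(x,y)=\int_0^x T_C\one_{[0,y]}(t)\,dt$ and the inclusion--exclusion definition of $\mu_C$: writing $C(b,d)-C(a,d)-C(b,c)+C(a,c)=\int_a^b T_C\bigl(\one_{[0,d]}-\one_{[0,c]}\bigr)\,d\lambda=\int_{(a,b]}T_C\one_{(c,d]}\,d\lambda$. To pass to arbitrary Borel sets, fix $B$ a half-open interval and note that the collection of $A\in\mathscr{B}$ for which the identity holds is a $\lambda$-system (closure under complements uses $\mu_C(I\times B)=\lambda(B)=\int T_C\one_B\,d\lambda$; closure under countable disjoint unions uses countable additivity of $\mu_C$ and of the integral, plus $L^1$-continuity and linearity of $T_C$), and it contains the $\pi$-system of half-open intervals, hence all of $\mathscr{B}$. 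Then fix $A\in\mathscr{B}$ and repeat the argument in the second coordinate. I expect this step to be the only one needing any care, and even so it is completely routine.

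\emph{Step 2 ($\Rightarrow$).} Assume $T_C\one_S=\one_R$. Then by Step~1, $\mu_C(R\times S)=\int_R\one_R\,d\lambda=\lambda(R)$. Property~(ii) gives $\int T_C\one_S\,d\lambda=\int\one_S\,d\lambda=\lambda(S)$, while $\int T_C\one_S\,d\lambda=\int\one_R\,d\lambda=\lambda(R)$; hence $\lambda(R)=\lambda(S)$ and $\mu_C(R\times S)=\lambda(R)=\lambda(S)$.

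\emph{Step 3 ($\Leftarrow$).} Assume $\mu_C(R\times S)=\lambda(R)=\lambda(S)$ and set $f:=T_C\one_S$. Positivity of $T_C$ and $0\le\one_S\le\one$ give $0\le f\le 1$ a.e., and property~(ii) gives $\int f\,d\lambda=\lambda(S)$. From Step~1 and the hypothesis, $\int_R f\,d\lambda=\mu_C(R\times S)=\lambda(R)=\int_R 1\,d\lambda$, so $\int_R(1-f)\,d\lambda=0$ with $1-f\ge0$ a.e.; hence $f=1$ a.e.\ on $R$. Consequently $\int_{R^c}f\,d\lambda=\int f\,d\lambda-\int_R f\,d\lambda=\lambda(S)-\lambda(R)=0$, and $f\ge0$ a.e.\ forces $f=0$ a.e.\ on $R^c$. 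Therefore $f=\one_R$ a.e., i.e.\ $T_C\one_S=\one_R$. The main obstacle is merely the bookkeeping in Step~1; once the rectangle identity is available, Steps~2 and~3 are two-line arguments exploiting the order bounds on $T_C\one_S$.
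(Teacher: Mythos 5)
Your proof is correct and follows essentially the same route as the paper: both hinge on the identity $\mu_C(R\times S)=\int_R T_C\one_S\,d\lambda$ and then use $0\le T_C\one_S\le 1$ together with property (ii) of Markov operators to force $T_C\one_S=1$ a.e.\ on $R$ and $0$ a.e.\ on $R^c$. The only difference is that you verify the rectangle identity in detail (via a $\pi$-$\lambda$ argument), whereas the paper simply quotes it as a known fact.
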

\begin{proof}
  Note the fact that $\mu_C(R\times S) = \int_R T_C\one_S \,d\lambda$. If $T_C\one_S = \one_R$ then $\mu_C(R\times S) = \lambda(R)$ and, by the property of $T_C$, $\lambda(R) = \int \one_R\,d\lambda = \int T_C\one_S \,d\lambda = \int \one_S \,d\lambda = \lambda(S)$.  Conversely, if $\mu_C(R\times S) = \lambda(R) = \lambda(S)$ then $\int_R T_C\one_S\,d\lambda = \mu_C(R\times S) = \lambda(S) = \int \one_S \,d\lambda = \int T_C\one_S \,d\lambda$ and $\int_R T_C\one_S \,d\lambda = \mu_C(R\times S) = \lambda(R) = \int_R \one \,d\lambda.$  Since $T_C\one_S(t) \in [0,1]$, $T_C\one_S(t) = 0$ for all $t \notin R$ and $T_C\one_S (t) = 1$ for all $t\in R$, i.e.~$T_C\one_S = \one_R$.
\end{proof}

\begin{prop} \label{prop:Markovfg}
  Let $T$ be a Markov operator and $f$ and $g$ be measure-preserving functions on $[0,1]$.  
  Then the following are equivalent.
  \begin{enumerate}
    \item \label{prop:Markovfg:2} $T\one_{g^{-1}(B)} = \one_{f^{-1}(B)}$ for all $B\in\mathscr{B}$.
    \item \label{prop:Markovfg:3} $T(\theta\circ g) = \theta\circ f$ for all Borel functions $\theta \in L^1$.
    \item \label{prop:Markovfg:4} $T\psi  = T_{ef}\circ T_{ge} \psi$ for all $g^{-1}(\mathscr{B})$-measurable functions $\psi$. 
  \end{enumerate}
\end{prop}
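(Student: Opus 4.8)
The plan is to prove the three statements equivalent by a cycle \ref{prop:Markovfg:2} $\Rightarrow$ \ref{prop:Markovfg:3} $\Rightarrow$ \ref{prop:Markovfg:4} $\Rightarrow$ \ref{prop:Markovfg:2}, using linearity and $L^1$-continuity of Markov operators together with the identities from Theorem \ref{thm:2.11DO2010} for the complete dependence copulas. Throughout, recall that $T_{ge} = T_{C_{g,e}}$ is right invertible with $T_{eg}\circ T_{ge} = T_{C_{e,g}*C_{g,e}}$; more to the point, $T_{ge}$ maps $\theta$ to the (unique) function whose composition with $g$ returns $\theta$, in the sense of Theorem \ref{thm:2.11DO2010}: $T_{ge}(\theta\circ g) = \theta$ a.e. Also $T_{ef}\psi = \psi\circ f$.

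For \ref{prop:Markovfg:2} $\Rightarrow$ \ref{prop:Markovfg:3}: assuming the indicator identity, note that for a Borel set $B$ we have $\theta\circ g = \one_{g^{-1}(B)}$ when $\theta = \one_B$, so \ref{prop:Markovfg:2} is exactly \ref{prop:Markovfg:3} for indicators $\theta = \one_B$. By linearity this extends to simple Borel functions $\theta$. For general $\theta\in L^1$ Borel, pick simple Borel $\theta_n \to \theta$ in $L^1(\lambda)$; since $g$ is measure-preserving, $\theta_n\circ g \to \theta\circ g$ in $L^1$ and $\theta_n\circ f \to \theta\circ f$ in $L^1$, and the boundedness of $T$ on $L^1$ gives $T(\theta\circ g) = \lim T(\theta_n\circ g) = \lim \theta_n\circ f = \theta\circ f$.

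For \ref{prop:Markovfg:3} $\Rightarrow$ \ref{prop:Markovfg:4}: a $g^{-1}(\mathscr{B})$-measurable function $\psi\in L^1$ can be written as $\psi = \theta\circ g$ for some Borel $\theta\in L^1$ (the standard factorization lemma for functions measurable with respect to a pullback $\sigma$-algebra). Then by \ref{prop:Markovfg:3}, $T\psi = \theta\circ f = T_{ef}\theta$. On the other hand, by Theorem \ref{thm:2.11DO2010}, $T_{ge}\psi = T_{ge}(\theta\circ g) = \theta$ a.e., so $T_{ef}\circ T_{ge}\,\psi = T_{ef}\theta = \theta\circ f$. Hence $T\psi = T_{ef}\circ T_{ge}\,\psi$. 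For \ref{prop:Markovfg:4} $\Rightarrow$ \ref{prop:Markovfg:2}: apply \ref{prop:Markovfg:4} to $\psi = \one_{g^{-1}(B)} = \one_B\circ g$, which is $g^{-1}(\mathscr{B})$-measurable; then $T\one_{g^{-1}(B)} = T_{ef}\circ T_{ge}(\one_B\circ g) = T_{ef}\one_B = \one_B\circ f = \one_{f^{-1}(B)}$, again using Theorem \ref{thm:2.11DO2010} twice.

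The only genuinely delicate point is the factorization used in \ref{prop:Markovfg:3} $\Rightarrow$ \ref{prop:Markovfg:4}: that every $g^{-1}(\mathscr{B})$-measurable $L^1$ function equals $\theta\circ g$ for a Borel $\theta$, and that this $\theta$ can be taken $\lambda$-integrable. This is the Doob--Dynkin lemma plus a routine check that $\int|\theta|\,d\lambda = \int|\theta\circ g|\,d\lambda = \int|\psi|\,d\lambda < \infty$ since $g$ is measure-preserving; I would state it as such. Everything else is bookkeeping with linearity and $L^1$-density of simple functions, for which the measure-preserving property of $f$ and $g$ is exactly what is needed to push limits through compositions.
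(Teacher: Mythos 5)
Your proof is correct, and two of the three implications (\ref{prop:Markovfg:2} $\Rightarrow$ \ref{prop:Markovfg:3} via indicators, linearity and $L^1$-continuity, and \ref{prop:Markovfg:4} $\Rightarrow$ \ref{prop:Markovfg:2} by plugging in $\psi=\one_{g^{-1}(B)}$) coincide with the paper's. The middle implication \ref{prop:Markovfg:3} $\Rightarrow$ \ref{prop:Markovfg:4} is where you genuinely diverge: you invoke the Doob--Dynkin factorization $\psi=\theta\circ g$ (with the integrability of $\theta$ secured by the measure-preserving property of $g$) and then evaluate both sides directly, $T\psi=\theta\circ f$ by \ref{prop:Markovfg:3} and $T_{ef}\circ T_{ge}\psi=T_{ef}\theta=\theta\circ f$ by two applications of Theorem \ref{thm:2.11DO2010}. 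The paper instead routes through the monoid structure of copulas: from \ref{prop:Markovfg:3} it deduces $T_{fe}\circ T\circ T_{eg}=I$, i.e.\ $C_{fe}*C*C_{eg}=M$, identifies $C_{fe}*C$ as the unique left inverse $C_{ge}$ of $C_{eg}$, left-multiplies by $C_{ef}$, and then restricts to $g^{-1}(\mathscr{B})$-measurable functions (using the same factorization fact, that $\set{\theta\circ g\colon\theta\text{ Borel}}$ is exactly the class of $g^{-1}(\mathscr{B})$-measurable functions, which you make explicit as Doob--Dynkin). Your argument is shorter and more elementary, needing only Theorem \ref{thm:2.11DO2010} and no uniqueness-of-inverse facts about the $*$-product; the paper's detour buys, as byproducts, the identities $C_{fe}*C*C_{eg}=M$ and $C_{fe}*C=C_{ge}$, which are of independent interest but not needed for the stated equivalence. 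Your explicit $L^1$-check on $\theta$ is a point the paper leaves implicit, so including it is a small improvement rather than a deviation.
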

\begin{proof}
%
%
\ref{prop:Markovfg:2} $\Rightarrow$ \ref{prop:Markovfg:3}: 
For every Borel set $B\subseteq I$, $T(\one_B\circ g) = T \one_{g^{-1}(B)} = \one_{f^{-1}(B)} = \one_B\circ f$.  
So, 
$T(\psi\circ g) = \psi\circ f$ for all simple Borel functions $\psi$.  By the standard measure-theoretic argument, $T (\theta\circ g) = \theta\circ f$ for every Borel function $\theta$. 

 \ref{prop:Markovfg:3} $\Rightarrow$ \ref{prop:Markovfg:4}:
 Using Theorem \ref{thm:2.11DO2010}, \ref{prop:Markovfg:3}.~implies that $T_{fe}\circ T \circ T_{eg} \theta = T_{fe}\paren{T\paren{\theta\circ g}} = T_{fe}\paren{\theta\circ f} = \theta$ for every Borel function $\theta$, and hence $T_{fe}\circ T \circ T_{eg} = I = T_M$, that is $C_{fe} * C * C_{eg} = M$.  
This means that $C_{fe}*C$ is the unique left inverse of $C_{eg}$, or $C_{fe}*C = C_{ge}$.  Left multiplying by $C_{ef}$ yields $C_{ef}*C_{fe}*C = C_{ef}* C_{ge}$, i.e.~$T_{ef} \circ T_{fe}\circ T = T_{ef}\circ T_{ge}.$  On the other hand, by 
Theorem \ref{thm:2.11DO2010}, 
for any Borel function $\theta$, $T_{ef} \circ T_{fe} (\theta\circ f) =  (\theta\circ f)$ which gives $T_{ef} \circ T_{fe} \circ T(\theta\circ g) =  T(\theta\circ g).$ 
    Since $\set{\theta\circ g\colon \theta \text{ is a Borel function}}$ coincides with the class of $g^{-1}(\mathscr{B})$-measurable functions, $T_{ef} \circ T_{fe} \circ T = T$ on the class of $L^1$-functions which are $g^{-1}(\mathscr{B})$-measurable.  Therefore, $T\psi = (T_{ef}\circ T_{ge})\psi$ for all ($g^{-1}(\mathscr{B}) = \sigma_C$)-measurable $\psi \in L^1$. 

 \ref{prop:Markovfg:4} $\Rightarrow$ \ref{prop:Markovfg:2}:  This is clear from taking $\psi = \one_{g^{-1}(B)}$.
\end{proof}

The equivalence relation $\approx$ on $\mathscr{B}$ is defined as follows: $E\approx F$ if and only if the symmetric difference $E\triangle F$ has Lebesgue measure zero.  Of course, $\approx$ is still an equivalence relation on any $\mathscr{S} \subseteq \mathscr{B}$.  
The equivalence class of $S$ in $\mathscr{S}$ is denoted by $[S]_{\mathscr{S}}$ or just $[S]$ if no confusion can arise.  The collection of equivalence classes in $\mathscr{S}$ is denoted by $[\mathscr{S}]$.  $[\mathscr{S}]$ is in fact a measure algebra induced by the Lebesgue measure $\lambda$. That is, $[\mathscr{S}]$ is a Boolean $\sigma$-algebra with respect to the operations $[S]\vee [R] = [S\cup R]$ and $[S]\wedge [R] = [S\cap R]$ together with $\lambda\colon [\mathscr{S}] \to [0,1]$ defined overloadedly by $\lambda([S]) = \lambda(S)$ and satisfying $\lambda\paren{\bigvee_{i=1}^\infty [A_i]} = \sum_{i=1}^\infty \lambda([A_i])$ if $[A_i]\wedge[A_j]=[\emptyset]$ for $i\neq j$.  See \cite[p.398]{Roy1988-Real}.

$T_C$ induces a well-defined equivalence class function $\Upsilon_C\colon [\sigma_C] \to [\sigma^*_C]$ mapping $[S]$ to $[R]$ if and only if $T_C\mathds{1}_S = \mathds{1}_R$. \draftnote{Check!}
It follows from the defining property ii of $T$ that $\lambda(R) = \lambda(S)$ and hence $\Upsilon_C$ is measure-preserving.
The following lemma summarizes results from Theorem 3.2 and the proof of Theorem 3.3 in \cite{DarOls2010-Characterization}. 
\begin{lem}[\cite{DarOls2010-Characterization}] \label{lem:equivclassmap}
  Let $\mathscr{S}\subseteq\mathscr{B}$ be a non-atomic $\sigma$-algebra. Then 
  \begin{enumerate}
    \item there exists a surjective isomorphism $\Psi\colon \brac{\mathscr{S}} \to \brac{\mathscr{B}}$ which means $\Psi([S]^c)=\Psi([S])^c$, $\Psi\paren{[S_1]\vee[S_2]} = \Psi\paren{[S_1]}\vee\Psi\paren{[S_2]}$, $\lambda([S]) = \lambda(\Psi([S]))$ and $\Psi$ is onto (it follows that $\Psi$ is one-to-one and an isometry with respect to the metric $\rho([S],[R]) = \lambda([S]\triangle[R])$ and preserves countable unions and intersections); and 
    \item for any surjective isomorphism $\Psi\colon \brac{\mathscr{S}} \to \brac{\mathscr{B}}$, there exists a unique (a.e.) measure-preserving Borel function $h\colon [0,1] \to [0,1]$ such that $h^{-1}(\mathscr{B}) \subseteq \mathscr{S}$ (in fact, they are essentially equivalent,) and that $h^{-1}(B) \in \Psi^{-1}([B])$ for all $B\in\mathscr{B}.$  
  \end{enumerate}
\end{lem}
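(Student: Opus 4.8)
The two assertions are standard facts about measure algebras, and the plan is to reduce part (1) to the classical classification of measure algebras and part (2) to the point-realization of a Boolean $\sigma$-isomorphism between standard probability spaces; one can of course simply cite \cite[Theorem~3.2 and the proof of Theorem~3.3]{DarOls2010-Characterization} (see also \cite{Roy1988-Real}), but here is how I would carry the argument out.

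For part (1), I would first record that $\brac{\mathscr{S}}$, equipped with $\rho([S],[R]) = \lambda(S\triangle R)$ and the inherited Boolean operations and mass $\lambda([S]) = \lambda(S)$, is a probability measure algebra which is \emph{separable} (the countable algebra of finite unions of dyadic subintervals of $I$ is $\rho$-dense in $\brac{\mathscr{B}}$, so its metric subspace $\brac{\mathscr{S}}$ is separable) and \emph{complete} (if $S_n\in\mathscr{S}$ and $[S_n]\rar[S]$ in $\rho$, then passing to a subsequence along which $\one_{S_{n_k}}\rar\one_S$ $\lambda$-a.e.~shows that $[S]$ is the class of a Boolean combination of the $S_{n_k}$, whence $[S]\in\brac{\mathscr{S}}$; thus $\brac{\mathscr{S}}$ is closed in the complete space $\brac{\mathscr{B}}$). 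By hypothesis it is non-atomic. The classical isomorphism theorem for measure algebras — every complete, separable, non-atomic probability measure algebra is isomorphic to the Lebesgue measure algebra on $I$ — then produces a measure-preserving surjective Boolean $\sigma$-isomorphism $\Psi\colon\brac{\mathscr{S}}\rar\brac{\mathscr{B}}$, and the bracketed properties (injectivity, isometry for $\rho$, preservation of countable joins and meets) are automatic consequences of being such a map.

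For part (2), given a surjective isomorphism $\Psi\colon\brac{\mathscr{S}}\rar\brac{\mathscr{B}}$, I would realize $\Psi^{-1}$ by a point map. Fix the countable generating algebra $\mathscr{A}\subseteq\mathscr{B}$ of finite unions of dyadic subintervals and pick, for each $A\in\mathscr{A}$, a set $S_A\in\mathscr{S}$ with $[S_A]=\Psi^{-1}([A])$. Since $\Psi^{-1}$ is a $\sigma$-homomorphism and $\mathscr{A}$ carries only countably many Boolean relations, after deleting a single $\lambda$-null set $N$ one may arrange that the representatives satisfy $S_{A^c}=I\setminus(N\cup S_A)$, $S_{A\cup A'}=S_A\cup S_{A'}$, $S_I=I\setminus N$, $S_\emptyset=\emptyset$, etc., \emph{exactly} on $I\setminus N$. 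With $D_n=\set{y\in I\colon\text{the }n\text{-th binary digit of }y\text{ equals }1}\in\mathscr{A}$, define $h\colon I\rar I$ by $h(x)=\sum_{n=1}^\infty 2^{-n}\one_{S_{D_n}}(x)$ for $x\notin N$ and arbitrarily for $x\in N$; then $h$ is Borel. A direct check gives $h^{-1}(A)=S_A$ (mod null) for every dyadic interval $A$, hence $h^{-1}(A)\in\Psi^{-1}([A])$, and a routine $\pi$--$\lambda$ (monotone-class) argument, using that both $h^{-1}(\cdot)$ and $\Psi^{-1}$ respect complements and countable disjoint unions, extends this to $h^{-1}(B)\in\Psi^{-1}([B])$ for all $B\in\mathscr{B}$. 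In particular $h^{-1}(B)$ agrees a.e.~with a member of $\mathscr{S}$, so $h^{-1}(\mathscr{B})\subseteq\mathscr{S}$ modulo null sets, and the surjectivity of $\Psi$ gives the reverse essential inclusion, i.e.~$h^{-1}(\mathscr{B})$ and $\mathscr{S}$ are essentially equivalent; measure preservation is immediate from $\lambda(h^{-1}(B))=\lambda(\Psi^{-1}([B]))=\lambda([B])=\lambda(B)$. For uniqueness, if $h_1$ and $h_2$ both satisfy the conclusion then $h_1^{-1}(A)\approx\Psi^{-1}([A])\approx h_2^{-1}(A)$ for every $A\in\mathscr{A}$; since the digit sets $D_n$ separate points of $I$, a positive-measure disagreement $\set{h_1\neq h_2}$ would force $h_1^{-1}(D_n)\not\approx h_2^{-1}(D_n)$ for some $n$, a contradiction, so $h_1=h_2$ $\lambda$-a.e.

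I expect the main obstacle to be the rigidification step in part (2): upgrading the representatives $S_A$, which a priori realize the Boolean identities only up to null sets, to representatives realizing them literally after removal of one fixed null set, so that the binary-digit formula genuinely defines a function whose level sets are the prescribed sets. The remaining ingredients are either a black-box citation (the classification theorem used in part (1)) or routine generating-class and approximation arguments.
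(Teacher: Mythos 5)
Your proposal is correct, and its overall strategy matches the paper's: part (1) is taken as a black-box citation of the classical measure-algebra isomorphism theorem (the paper likewise inherits it from Darsow--Olsen; its written proof only addresses part (2)), and part (2) is proved by realizing $\Psi^{-1}$ as a point map built from representatives over a countable generating family, extending from that family to all of $\mathscr{B}$ by a monotone-class/$\pi$--$\lambda$ argument, and getting a.e.\ uniqueness from agreement on a countable separating family. The concrete construction differs: the paper chooses \emph{nested} representatives $S_k\in\Psi^{-1}([\,[0,r_k]\,])$ over an enumeration of the rationals and sets $h(x)=\inf\set{r_k\colon x\in S_k}$ (a quantile-type construction, with uniqueness delegated to Royden's Proposition 10), whereas you choose representatives $S_{D_n}$ of the binary-digit sets and set $h=\sum_n 2^{-n}\one_{S_{D_n}}$. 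Both work; your route trades the paper's monotone selection of representatives for the dyadic-expansion bookkeeping (the trailing-ones set, which you correctly dispose of as null). One remark: the ``rigidification'' step you single out as the main obstacle is actually dispensable --- since the desired conclusion $h^{-1}(B)\in\Psi^{-1}([B])$ is itself a mod-null statement, it suffices that $\Psi^{-1}$ be a Boolean homomorphism on equivalence classes, and $h$ as defined from arbitrary Borel representatives $S_{D_n}$ is automatically $\sigma\paren{\set{S_{D_n}}}$-measurable, which even gives the exact inclusion $h^{-1}(\mathscr{B})\subseteq\mathscr{S}$; so that step can simply be dropped rather than carefully justified.
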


\begin{proof}
Enumerate $\Q\cap[0,1] = \set{r_n}_{n\in\N}$ and set $I_n = [0,r_n]$.  For each $n\in\N$, choose $S_n$ in the equivalence class $\Psi^{-1}([I_n])$ so that $S_k=[0,1]$ if $r_k=1$ and $S_k\subset S_l$ whenever $r_k < r_l$.
Define a Borel measure-preserving function $h(x) = \inf\set{r_k\colon x\in S_k}$.
For $S\in\mathscr{S}$, choose $B_0\in\Psi([S])$ and set $S_0 = h^{-1}(B_0)$.  Since $\Psi$ is 1-1, $[S] = \Psi^{-1}([B_0])$.  We prove that $h^{-1}(B_0) \in \Psi^{-1}([B_0])$ by considering $\mathscr{M} = \set{B\in\mathscr{B}\colon h^{-1}(B) \in \Psi^{-1}([B])}.$
Since $\mathscr{M}$ is a monotone class containing all $[0,r_k]$, it contains all Borel sets and we have the claim.
To prove uniqueness, 
if $k\colon[0,1]\to[0,1]$ is a measure-preserving Borel function such that $k^{-1}(B) \in \Psi^{-1}([B])$ for all $B\in\mathscr{B}$, then $k^{-1}([0,r_n]) \in \Psi^{-1}([[0,r_n]])$.  So $\lambda\paren{h^{-1}([0,r_n])\triangle k^{-1}([0,r_n])}=0$ for all $n$.  
By Proposition 10 
in \cite[Chapter 11 (p.~261)]{Roy1988-Real}, $h=k$ a.e.
\end{proof}

\begin{thm} \label{thm:Char-non-atomic}
%

  Let $C$ be a copula with associated Markov operator $T_C$ and doubly stochastic measure $\mu_C$. For measure-preserving functions $f$ and $g$ on $[0,1]$, the following are equivalent.
\begin{enumerate}
  \item \label{MainThm:graphf=g} $\mu_C\paren{\graph\set{f=g}} = 1$, where $\graph\set{f=g} = \set{(x,y)\colon f(x) = g(y)}$.
  \item \label{MainThm:Markovfg} $T_C = T_{ef}\circ T_{ge} $ on the class of $g^{-1}(\mathscr{B})$-measurable functions. 
  \item \label{MainThm:nonatomic} $C$ is non-atomic with $\sigma_C \supseteq g^{-1}(\mathscr{B})$ and $\sigma_C^*\supseteq f^{-1}(\mathscr{B})$.
\end{enumerate}
\end{thm}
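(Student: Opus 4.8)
The plan is to prove the cycle \ref{MainThm:graphf=g} $\Leftrightarrow$ \ref{MainThm:Markovfg}, \ref{MainThm:Markovfg} $\Rightarrow$ \ref{MainThm:nonatomic}, and \ref{MainThm:nonatomic} $\Rightarrow$ \ref{MainThm:Markovfg}. Throughout I would use that, by the equivalence of \ref{prop:Markovfg:2} and \ref{prop:Markovfg:4} in Proposition \ref{prop:Markovfg}, statement \ref{MainThm:Markovfg} is the same as
\[ T_C\one_{g^{-1}(B)} = \one_{f^{-1}(B)} \qquad\text{for every } B\in\mathscr{B}, \]
so everything can be phrased in terms of $T_C$ on indicators of sets in $g^{-1}(\mathscr{B})$, and Lemma \ref{lem:TmuAxB} converts each such identity into the statement $\mu_C(f^{-1}(B)\times g^{-1}(B)) = \lambda(B)$ (recalling $\lambda(f^{-1}(B))=\lambda(g^{-1}(B))=\lambda(B)$ since $f,g$ are measure-preserving).

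For \ref{MainThm:Markovfg} $\Rightarrow$ \ref{MainThm:graphf=g} I would fix a countable point-separating family, e.g.\ $B_n=[0,r_n]$ with $\set{r_n}$ an enumeration of $\Q\cap[0,1]$, and observe that $f(x)=g(y)$ holds exactly when $\one_{f^{-1}(B_n)}(x)=\one_{g^{-1}(B_n)}(y)$ for all $n$; this writes $\graph\set{f=g}$ as the countable intersection of the sets $\paren{f^{-1}(B_n)\times g^{-1}(B_n)}\cup\paren{f^{-1}(B_n)^c\times g^{-1}(B_n)^c}$. Lemma \ref{lem:TmuAxB} applied to $T_C\one_{g^{-1}(B_n)}=\one_{f^{-1}(B_n)}$, together with Proposition \ref{prop:sigma}\eqref{sigmaminus} applied to the complements, shows each such set has $\mu_C$-measure $\lambda(B_n)+\lambda(B_n^c)=1$, hence so does the intersection. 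For the converse \ref{MainThm:graphf=g} $\Rightarrow$ \ref{MainThm:Markovfg}, I would note that for every $B$ one has $\graph\set{f=g}\subseteq\paren{f^{-1}(B)\times g^{-1}(B)}\cup\paren{f^{-1}(B^c)\times g^{-1}(B^c)}$; since the marginals of the doubly stochastic measure $\mu_C$ are Lebesgue, these two disjoint boxes have $\mu_C$-measure at most $\lambda(B)$ and $\lambda(B^c)$, so $\mu_C(\graph\set{f=g})=1$ forces $\mu_C(f^{-1}(B)\times g^{-1}(B))=\lambda(B)$, whence $T_C\one_{g^{-1}(B)}=\one_{f^{-1}(B)}$ by Lemma \ref{lem:TmuAxB}, i.e.\ \ref{MainThm:Markovfg} via Proposition \ref{prop:Markovfg}.

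The implication \ref{MainThm:Markovfg} $\Rightarrow$ \ref{MainThm:nonatomic} should be short: the reformulation of \ref{MainThm:Markovfg} says precisely that $g^{-1}(B)\in\sigma_C$ and $f^{-1}(B)\in\sigma_C^*$ for all $B$, so $g^{-1}(\mathscr{B})\subseteq\sigma_C$ and $f^{-1}(\mathscr{B})\subseteq\sigma_C^*$. Non-atomicity of $\sigma_C$ (and symmetrically $\sigma_C^*$) I would get from a small general fact: any sub-$\sigma$-algebra containing $h^{-1}(\mathscr{B})$ for a measure-preserving $h$ is non-atomic — if $S$ were an atom, $B\mapsto\lambda(S\cap h^{-1}(B))$ would be a measure absolutely continuous with respect to $\lambda$ with density $\phi\in[0,1]$ satisfying $\int_B\phi\in\set{0,\int\phi}$ for all $B$, and splitting a positive-measure level set $\set{\phi\ge\varepsilon}$ into two pieces of positive Lebesgue measure (possible since $\lambda$ is non-atomic) contradicts this.

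The hard direction is \ref{MainThm:nonatomic} $\Rightarrow$ \ref{MainThm:Markovfg}, and this is where the measure-algebra machinery of Lemma \ref{lem:equivclassmap} enters. My approach: since $\sigma_C$ is non-atomic, the first part of Lemma \ref{lem:equivclassmap} gives a surjective isomorphism $\Psi\colon[\sigma_C]\to[\mathscr{B}]$; the equivalence-class map $\Upsilon_C\colon[\sigma_C]\to[\sigma_C^*]$ defined before the lemma is a measure-preserving Boolean isomorphism, so $\Psi\circ\Upsilon_C^{-1}\colon[\sigma_C^*]\to[\mathscr{B}]$ is again a surjective isomorphism. The second part of Lemma \ref{lem:equivclassmap} applied to $\Psi$ yields a measure-preserving $\widetilde g$ with $\widetilde g^{-1}(\mathscr{B})$ essentially equal to $\sigma_C$ and $[\widetilde g^{-1}(B)]=\Psi^{-1}([B])$, and applied to $\Psi\circ\Upsilon_C^{-1}$ it yields a measure-preserving $\widetilde f$ with $[\widetilde f^{-1}(B)]=\Upsilon_C(\Psi^{-1}([B]))=\Upsilon_C\paren{[\widetilde g^{-1}(B)]}$, i.e.\ $T_C\one_{\widetilde g^{-1}(B)}=\one_{\widetilde f^{-1}(B)}$ for all $B$; Proposition \ref{prop:Markovfg} then gives $T_C = T_{e\widetilde f}\circ T_{\widetilde g e}$ on $\widetilde g^{-1}(\mathscr{B})$-measurable functions. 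What remains is to identify $\widetilde g$ with the given $g$ and $\widetilde f$ with the given $f$ using the hypotheses $g^{-1}(\mathscr{B})\subseteq\sigma_C$ and $f^{-1}(\mathscr{B})\subseteq\sigma_C^*$: from $g^{-1}(\mathscr{B})\subseteq\sigma_C\approx\widetilde g^{-1}(\mathscr{B})$ a Doob--Dynkin type factorization gives a Borel (hence measure-preserving) $\theta$ with $g=\theta\circ\widetilde g$, and one must push this through $\Upsilon_C$ to recover $f=\theta\circ\widetilde f$. I expect this last matching step — ensuring the transported functions genuinely coincide with $f$ and $g$ rather than with some measure-automorphism twist of them — to be the main obstacle, and the point at which the full strength of "both associated $\sigma$-algebras are non-atomic, together with the stated containments linking them through $\Upsilon_C$" must be used.
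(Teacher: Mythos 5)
Your treatment of \ref{MainThm:graphf=g} $\Leftrightarrow$ \ref{MainThm:Markovfg} and of \ref{MainThm:Markovfg} $\Rightarrow$ \ref{MainThm:nonatomic} is correct and essentially the paper's: the paper also funnels everything through Proposition \ref{prop:Markovfg} and Lemma \ref{lem:TmuAxB}, proving \ref{MainThm:graphf=g} $\Rightarrow$ \ref{MainThm:Markovfg} from the inclusion $\graph\set{f=g}\cap\set{(x,y)\colon f(x)\in B}\subseteq f^{-1}(B)\times g^{-1}(B)$, and \ref{MainThm:Markovfg} $\Rightarrow$ \ref{MainThm:graphf=g} by intersecting the sets $B_n=\bigcup_i f^{-1}(I_{i,n})\times g^{-1}(I_{i,n})$ built from dyadic partitions; your rational endpoints and complemented boxes are an inessential variation, and your observation that any sub-$\sigma$-algebra containing $h^{-1}(\mathscr{B})$ for a measure-preserving $h$ is non-atomic is exactly what the paper asserts without proof, so your level-set argument is a welcome addition.

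The obstacle you flag in \ref{MainThm:nonatomic} $\Rightarrow$ \ref{MainThm:Markovfg} is a genuine gap, and it cannot be closed: for the \emph{given} $f$ and $g$ the implication as literally stated is false. Take $C=M$, $g=e$, and $f$ any measure-preserving Borel function differing from $e$ on a set of positive measure, e.g.\ $f(x)=2x\bmod 1$. Then $\sigma_M=\sigma^*_M=\mathscr{B}$ and $M$ is non-atomic, so \ref{MainThm:nonatomic} holds; but $T_M$ is the identity while $T_{ef}\circ T_{ge}\psi=\psi\circ f$, so \ref{MainThm:Markovfg} fails, and $\mu_M\paren{\graph\set{f=g}}=\lambda\set{x\colon f(x)=x}<1$, so \ref{MainThm:graphf=g} fails too. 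The hypotheses $\sigma_C\supseteq g^{-1}(\mathscr{B})$ and $\sigma^*_C\supseteq f^{-1}(\mathscr{B})$ do not couple $f$ to $g$ through $\Upsilon_C$, so no Doob--Dynkin matching of your $\widetilde f,\widetilde g$ with $f,g$ can exist in general. What your construction actually proves is the existential statement: $C$ is non-atomic if and only if \ref{MainThm:graphf=g}/\ref{MainThm:Markovfg} hold for \emph{some} measure-preserving $f,g$ --- and this is also all the paper's own proof establishes, since step iii) there produces functions via $\Xi_C$ and $\Xi_C\circ\Upsilon_C$ (Lemma \ref{lem:equivclassmap}), silently reuses the letters $f,g$ for them, and stops. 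So either read the theorem existentially (which is how it is used later, e.g.\ in Example \ref{exam:nonatomic}), or strengthen \ref{MainThm:nonatomic} to the condition $\Upsilon_C\paren{[g^{-1}(B)]}=[f^{-1}(B)]$ for all $B\in\mathscr{B}$, which makes your final identification step vacuous; with that reading, your proposal minus the matching step is a complete proof along the same route as the paper's.
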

\begin{proof} 
\ref{MainThm:graphf=g} $\Rightarrow$ \ref{MainThm:Markovfg}: 
By Proposition \ref{prop:Markovfg}, Lemma \ref{lem:TmuAxB} and the measure-preserving property of $f$ and $g$, it suffices to show that $\mu_C(f^{-1}(B) \times g^{-1}(B)) = \lambda(f^{-1}(B))$ for all $B\in\mathscr{B}$.  This follows from 
$\mu_C\paren{f^{-1}(B)\times g^{-1}(B)} \leq \mu_C\paren{f^{-1}(B)\times I} = \lambda\paren{f^{-1}(B)}$  
and 
\begin{align*}
  \mu_C\paren{f^{-1}(B)\times g^{-1}(B)} 
  &\geq \mu_C\paren{\set{(x,y)\colon f(x) = g(y) \in B}} \\
  &= \mu_C\paren{\set{(x,y)\colon f(x) = g(y)}\cap\set{(x,y)\colon f(x) \in B}} \\
  &=  \lambda\paren{f^{-1}(B)}. 
\end{align*}

\ref{MainThm:Markovfg} $\Rightarrow$ \ref{MainThm:graphf=g}: For each $n\in\N$ and $i=1,2,\dots,2^n$, put $I_{i,n} = \brac{\frac{i-1}{2^n}, \frac{i}{2^n}}$ and $B_n = \bigcup_i f^{-1}(I_{i,n})\times g^{-1}(I_{i,n})$.  Then $B_1 \supseteq B_2 \supseteq \cdots \supseteq B_n \supseteq \cdots$ and 
\begin{align*}
\mu_C\paren{B_n} 
&= \sum_{i=1}^{2^n} \mu_C\paren{f^{-1}(I_{i,n})\times g^{-1}(I_{i,n})} = \sum_{i=1}^{2^n} \lambda\paren{f^{-1}(I_{i,n})}
=  \sum_{i=1}^{2^n} \lambda\paren{I_{i,n}} = 1,
\end{align*} 
where we have used Lemma \ref{lem:TmuAxB} in the second equality and the measure-preserving property of $f$ in the third.  Set $B = \bigcap_{n=1}^\infty B_{n}$.  We have $\displaystyle \mu_C(B) = \lim_{n\to\infty} \mu_C(B_n) = 1$.
It then suffices to show that  $B = \set{(x,y)\colon f(x) = g(y)}$.  
First, if $f(x) \neq g(y)$ then there exists $n\in\N$ such that $\frac{1}{2^n} < |f(x) - g(y)|$ and hence $(x,y) \notin B_n$.  Conversely, it is clear that  $\set{(x,y)\colon f(x) = g(y)} = \bigcup_i \set{(x,y)\colon f(x) = g(y) \in I_{i,n}} \subseteq B_n$ for all $n$. 

\ref{MainThm:Markovfg} $\Rightarrow$ \ref{MainThm:nonatomic}:  By Proposition \ref{prop:Markovfg}, $T_C\one_{g^{-1}(B)} = \one_{f^{-1}(B)}$ for all $B\in\mathscr{B}$, which implies that $g^{-1}(\mathscr{B}) \subseteq \sigma_C$ and  $f^{-1}(\mathscr{B}) \subseteq \sigma^*_C$.  But both $g^{-1}(\mathscr{B})$ and $f^{-1}(\mathscr{B})$ are non-atomic because $g$ and $f$ are measure-preserving.  Therefore the finer $\sigma$-algebras $\sigma_C$ and $\sigma^*_C$ are non-atomic as well.

\ref{MainThm:nonatomic} $\Rightarrow$ \ref{MainThm:Markovfg}: Our proof is in three steps.
i) By applying Lemma \ref{lem:equivclassmap}(1) to $\sigma_C$ and $\sigma_C^*$, 
it follows that $\Upsilon_C\colon \brac{\sigma_C} \to \brac{\sigma_C^*}$ defined earlier is one-to-one, onto, measure-preserving and preserves order, complementation and the lattice operation on equivalence classes corresponding to countable unions of monotonic sequence of sets, i.e.~$\Upsilon_C$ is a surjective isomorphism.
ii) By Lemma \ref{lem:equivclassmap}(2), there exists  $\Xi_C\colon \brac{\sigma^*_C} \to \brac{\mathscr{B}}$  
    and a unique (a.e.) Borel function $f\colon I\to I$ such that $f^{-1}(B) \in \Xi_C^{-1}([B])$ for all $B\in\mathscr{B}.$
    So the composition $\Xi_C\circ \Upsilon_C\colon \brac{\sigma_C} \to \brac{\mathscr{B}}$ satisfies the same properties and induces a unique Borel function $g\colon I\to I$ such that 
    \begin{equation} \label{eq:g}
      g^{-1}(B) \in (\Xi_C\circ\Upsilon_C)^{-1}([B]) \qquad\text{ for all $B\in\mathscr{B}$.}
    \end{equation} 
%
iii) 
From \eqref{eq:g}, $\Upsilon_C\paren{[g^{-1}(B)]} = {\Xi_C^{-1}([B])}$ and,  by the definition of $\Upsilon_C$, $T_C \one_{g^{-1}(B)} = \one_{f^{-1}(B)}$ for all $B\in\mathscr{B}$. And the proof is done by Proposition \ref{prop:Markovfg}.
\end{proof}

In particular, for measure-preserving functions $f,g$ on $I$, the support of $C_{ef}*C_{ge}$ ``is'' the graph of $f(x) = g(y)$, that is $\mu_{C_{ef}*C_{ge}}(\graph\set{f=g}) = 1$, and its mass is distributed uniformly in the sense that $T_{C_{ef}*C_{ge}} \one_B = T_{ef}\paren{ T_{ge} \one_B }$ is $(\sigma_C^*= f^{-1}(\mathscr{B}))$-measurable for all $B\in\mathscr{B}$. 
%

\begin{exam}
For a fixed $\alpha \in (0,1)$, consider $C_\alpha = \alpha M + (1-\alpha) W$ with Markov operator $T_\alpha$.  
It is readily verified that $\sigma_{C_\alpha}=\sigma^*_{C_\alpha}= \set{S\in\mathscr{B}\colon S = 1-S}$ and $T_\alpha \one_S = \one_S$ for all $S \in \sigma_{C_\alpha}$.  But only $T_{\frac{1}{2}}$ has the property that $T_{\frac{1}{2}}\one_B$ is  $\sigma_{C_{\frac{1}{2}}}$-measurable for all $B\in\mathscr{B}$.  Note also that $C_{\frac{1}{2}} = L_{\frac{1}{2}}*L^t_{\frac{1}{2}}$.
\end{exam}

\section{Copulas with fractal support}
Let us recall the construction of copulas with self-similar supports in Fredricks et al.~
\cite{FreNelRod2005-Copulas} put in the context of patched copulas \cite{YanJinJia2011-Approximation,ChaSanSum2016-Patched}.
\begin{defn}
A \emph{transformation matrix} is a matrix $A$ with nonnegative entries, for which the sum of all entries is $1$ and 
every row and column has at least one non-zero entry.
\end{defn}
Given a transformation matrix $A = [a_{ij}]_{k\times \ell}$ where the first index ($i$) is the column number from left ($i=1$) to right  ($i=k$) and the second index ($j$) is the row number from bottom ($j=1$) to top ($j=\ell$), the matrix multiplication of $A$ and $B=[b_{jm}]_{\ell\times n}$ is defined by $[a_{ij}] [b_{jm}] = \brac{\sum_{j=1}^\ell a_{ij}b_{jm}}$.  
$A^t = [a_{ji}]_{ \ell\times k}$.
This unconventional entry arrangement syncs well with the product of copulas defined \cite{DarNguOls1992-Copulas} as 
\[C*D (u,v) = \int_0^1 \partial_2C(u,t) \partial_1D(t,v) \,dt.\] 
Let $p_i$ denote the sum of the entries in the first $i$ columns of $A$ and let $q_j$ denote the sum of the entries in the first $j$ rows of $A$ where $p_0 = 0$ and $q_0=0$. 
The partitions $\set{p_i}_{i=0}^k$ and $\set{q_j}_{j=0}^\ell$ of $[0,1]$ yield a rectangular partition of $[0,1]^2$ consisting of $k\cdot\ell$ rectangles with vertices $(p_i,q_j)$.
Given a copula $C$, 
we construct a new copula denoted by $[A](C)$ by placing a scaled copy of $C$ in each of the $k\cdot\ell$ rectangles weighted according to the mass given by the corresponding entry in $A$. It is defined for $(u,v) \in R_{ij} = [p_{i-1},p_i]\times [q_{j-1},q_j]$ by 
\begin{align*}
  [A](C)(u,v) 
  &= \sum_{i'<i, j'<j}a_{i'j'} + \frac{u-p_{i-1}}{p_i-p_{i-1}}\sum_{j'<j} a_{ij'} 
  + \frac{v-q_{j-1}}{q_j-q_{j-1}}\sum_{i'<i} a_{i'j}\\
  &\quad + a_{ij} C\paren{\frac{u-p_{i-1}}{p_i-p_{i-1}},\frac{v-q_{j-1}}{q_j-q_{j-1}}}.
\end{align*}
Here, empty sums are zero by convention.   See  \cite{FreNelRod2005-Copulas} for more details.  
It will be more convenient to view $[A](C)$ as the so-called \emph{patched copula} \cite{YanJinJia2011-Approximation,ChaSanSum2016-Patched} defined as  
\[[A](C)(u,v) = \sum_{i=1}^k \sum_{j=1}^\ell a_{ij} C\paren{F_i(u),G_j(v)}\]
where \label{Fi_Gj} $F_i(u) = \min\paren{1,\dfrac{u-p_{i-1}}{p_i - p_{i-1}}}\one_{(p_{i-1}, \infty)}(u)$ is the uniform distribution on $[p_{i-1},p_i]$ and $G_j(v) = \min\paren{1,\dfrac{v-q_{j-1}}{q_j - q_{j-1}}}\one_{(q_{j-1}, \infty)}(v)$ is the uniform distribution on $[q_{j-1},q_j]$.  
Denote $\triangle p_i = p_i - p_{i-1}$ and  $\triangle q_j = q_j - q_{j-1}$. 

 
We then investigate how this rectangular patching of a copula $C$ according to the transformation matrix $A$ affects $C$ in terms of their Markov operators.
\begin{align}
  T_{[A](C)}f (x) 
  &= \sum_{i=1}^k \sum_{j=1}^\ell \frac{a_{ij}}{\triangle q_j} \frac{d}{dx} \int_{q_{j-1}}^{q_j} \partial_2 C\paren{F_i(x),G_j(t)} f(t)\,dt \notag\\
  &= \sum_{i=1}^k \sum_{j=1}^\ell a_{ij} \frac{d}{dx} \int_{0}^{1} \partial_2 C\paren{F_i(x),s} f\circ G_j^{-1}(s)\,ds \notag\\
  &= \sum_{i=1}^k \sum_{j=1}^\ell \frac{a_{ij}}{\triangle p_i} T_C(f\circ G_j^{-1})(F_i(x)). \label{eq:T_AC}
\end{align}
Consequently, $\displaystyle T_{[A](C)}(f)(F_i^{-1}(x)) = \sum_{j=1}^\ell \frac{a_{ij}}{\triangle p_i} T_C(f\circ G_j^{-1})(x)$, \draftnote{bdry pts of partition?} which can be written in matrix form as 
\[ \Big[ T_{[A](C)}(f)(F_i^{-1}(x)) \Big]_{k\times 1} = \Big[\frac{a_{ij}}{\triangle p_i}\Big]_{k\times\ell} \Big[T_C(f\circ G_j^{-1})(x)\Big]_{\ell  \times 1}.\]  
See \cite{Tru2012-Idempotent, TruSan2012-Idempotent} for essentially the same identity in terms of Markov kernels.

Define inductively $[A]^n(C) = [A]\paren{[A]^{n-1}(C)}$ for $n\geq 2$.   
Fredricks et al.~\cite{FreNelRod2005-Copulas} showed that for any transformation matrix $A \neq [1]$ and copula $C$, $[A]^n(C)$ converges (pointwise and hence uniformly) to a unique copula $C_A$, as $n\to\infty$.  Moreover, $C_A$ is the fixed point of $[A]$, i.e.~$[A](C_A) = C_A$.
However, since uniform convergence will not suffice for our purposes, we shall investigate the convergence of $[A]^k(D)$ with respect to a stronger norm with respect to which the $*$-product is jointly continuous. We choose the modified Sobolev norm defined as $\norm{C}^2_S = \norm{C}_1^2 + \norm{C}_2^2$ where $\norm{C}_i^2 = \int_0^1 \int_0^1 \abs{\partial_i C(u,v)}^2 \,du\,dv$.  See \cite{DarOls1995-Norms}.
\begin{prop}
  Let $A$ be a transformation matrix whose dimension is at least $2\times 2$ and let $C$ and $D$ be copulas. Then 
  \[\norm{[A](C) - [A](D)}_S \leq r\norm{C - D}_S\]
  where $\displaystyle r^2 = \max\paren{\sum_{j=1}^\ell \sum_{i=1}^k a_{ij}^2\frac{q_j - q_{j-1}}{p_i - p_{i-1}},\sum_{j=1}^\ell \sum_{i=1}^k a_{ij}^2\frac{p_i - p_{i-1}}{q_j - q_{j-1}}} < 1$ and $\norm{\cdot}_S$ is the modified Sobolev norm .
\end{prop}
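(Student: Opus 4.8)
The plan is to estimate the two Sobolev seminorms $\norm{\cdot}_1$ and $\norm{\cdot}_2$ of the difference $[A](C) - [A](D)$ separately, and then combine. I would start from the patched-copula formula $[A](C)(u,v) = \sum_{i,j} a_{ij}\, C(F_i(u),G_j(v))$, which is linear in $C$, so that
\[
[A](C)(u,v) - [A](D)(u,v) = \sum_{i=1}^k \sum_{j=1}^\ell a_{ij}\,\big(C - D\big)(F_i(u), G_j(v)).
\]
Write $E = C - D$. Differentiating in $u$ on the rectangle $R_{ij} = [p_{i-1},p_i]\times[q_{j-1},q_j]$, only the $(i,j)$-term survives (the other terms are constant in $u$ there, since $F_{i'}$ is constant on that strip for $i'\neq i$), and $\partial_u F_i(u) = 1/\triangle p_i$, so $\partial_1\big([A]E\big)(u,v) = \sum_j a_{ij}\,\triangle p_i^{-1}\, (\partial_1 E)(F_i(u),G_j(v))$ for $(u,v)\in R_{ij}$.

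Next I would integrate $\abs{\partial_1([A]E)}^2$ over $I^2$ by decomposing into the rectangles $R_{ij}$, apply the change of variables $(u,v)\mapsto (F_i(u),G_j(v))$ whose Jacobian is $\triangle p_i \cdot \triangle q_j$, and use the fact that for fixed $i$ the sets $R_{ij}$ (over $j$) tile a vertical strip so there is no cross-term issue in $j$ — i.e. the sum over $j$ comes out of the square cleanly because distinct rectangles are disjoint. This yields
\[
\norm{[A]E}_1^2 = \sum_{i=1}^k \sum_{j=1}^\ell a_{ij}^2\, \frac{\triangle q_j}{\triangle p_i}\, \int_0^1\int_0^1 \abs{\partial_1 E(s,t)}^2\,ds\,dt \le \Big(\max_{i,j'}\sum_{j} \cdots\Big)\norm{E}_1^2,
\]
and more precisely the coefficient is bounded by $\sum_{j=1}^\ell\sum_{i=1}^k a_{ij}^2\,\triangle q_j/\triangle p_i$. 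Symmetrically, differentiating in $v$ gives $\norm{[A]E}_2^2 \le \big(\sum_{j,i} a_{ij}^2\,\triangle p_i/\triangle q_j\big)\norm{E}_2^2$. Adding the two and bounding each seminorm's multiplier by $r^2$ (the max of the two sums) gives $\norm{[A]E}_S^2 \le r^2\norm{E}_S^2$, which is the claimed inequality.

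The remaining point — and the only genuinely nontrivial one — is to prove $r^2 < 1$. Here I would use that $A$ is a transformation matrix, so $\sum_{i,j} a_{ij} = 1$ with all $a_{ij}\ge 0$, together with $\triangle p_i = \sum_j a_{ij}$ and $\triangle q_j = \sum_i a_{ij}$. For the first sum, $\sum_j\sum_i a_{ij}^2\,\triangle q_j/\triangle p_i = \sum_i \triangle p_i^{-1}\sum_j a_{ij}^2\,\triangle q_j$. Since $a_{ij} \le \triangle p_i$ and $a_{ij}\le\triangle q_j$, we get $a_{ij}^2\,\triangle q_j \le a_{ij}\,\triangle p_i\,\triangle q_j$, hence the inner sum is $\le \triangle p_i\sum_j a_{ij}\,\triangle q_j \le \triangle p_i \sum_j a_{ij} = \triangle p_i^2$ — wait, that only gives $\le \triangle p_i$, i.e. $r^2 \le \sum_i \triangle p_i = 1$. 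To get strict inequality I would argue that the dimension being at least $2\times 2$ forces, for at least one index, a strict inequality: some row or column of $A$ must have at least two positive entries (otherwise $A$, having no zero row or column, would be a permutation-type $1\times 1$ block structure incompatible with $k,\ell\ge2$ and all entries summing to $1$), and for that index $a_{ij} < \triangle q_j$ or $a_{ij} < \triangle p_i$ strictly for some $(i,j)$, making the corresponding term strictly smaller. I expect this strict-inequality bookkeeping — ruling out the degenerate case and tracking exactly where the slack appears in both of the two sums simultaneously (since $r^2$ is their max, I need strictness in \emph{both}) — to be the main obstacle; everything else is a routine change-of-variables computation.
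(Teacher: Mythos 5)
Your computation of the two seminorms is essentially the paper's: linearity of $C\mapsto[A](C)$, differentiation on each rectangle $R_{ij}$ (only the $(i,j)$ term depends on $u$ there), change of variables with Jacobian $\triangle p_i\,\triangle q_j$, giving $\norm{[A](C)-[A](D)}_1^2=\paren{\sum_{i,j}a_{ij}^2\,\triangle q_j/\triangle p_i}\norm{C-D}_1^2$ and symmetrically for $\norm{\cdot}_2$; combining with the max is routine. Up to that point you match the paper, and in fact you get equality, not just an upper bound.

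The genuine gap is exactly where you flagged it: the proof that $r^2<1$ is not completed, and the sketch you give does not work. Your structural claim that $k,\ell\ge 2$ forces some row or column of $A$ to have at least two positive entries is false: $A=\brac{\begin{smallmatrix}1/2&0\\0&1/2\end{smallmatrix}}$ (or any scaled permutation matrix of size $\ge 2$) is a transformation matrix in which every row and column has exactly one positive entry, and for it every nonzero $a_{ij}$ equals both $\triangle p_i$ and $\triangle q_j$, so the strictness source you propose ($a_{ij}<\triangle p_i$ or $a_{ij}<\triangle q_j$ somewhere) simply is not there — yet $r^2=1/2<1$ for this matrix, so the slack must come from elsewhere. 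The problem is that in your chain you threw away the factor $\triangle q_j$ too early (bounding $\sum_j a_{ij}\triangle q_j\le\sum_j a_{ij}$), which is what reduces your estimate to $r^2\le 1$ and forces you to hunt for strictness in the wrong place. Keep the weight instead: $a_{ij}^2\,\triangle q_j/\triangle p_i\le a_{ij}\,\triangle q_j$ since $a_{ij}\le\triangle p_i$, and summing over $i$ first using $\sum_i a_{ij}=\triangle q_j$ gives $\sum_{i,j}a_{ij}^2\,\triangle q_j/\triangle p_i\le\sum_j\triangle q_j^2$, which is strictly less than $\sum_j\triangle q_j=1$ because $\ell\ge 2$ and every row of $A$ contains a nonzero entry, so every $\triangle q_j$ lies in $(0,1)$. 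The same argument with the roles of rows and columns exchanged (using $k\ge 2$) handles the second sum, so strictness in both sums comes for free and no case analysis on the pattern of zeros of $A$ is needed.
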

\begin{proof}
Denote $A = [a_{ij}]_{k\times \ell}$ with $k,\ell \geq 2$ and let $\set{p_i}_{i=0}^k$, $\set{q_j}_{j=0}^\ell$ be the induced partitions of $[0,1]$ on the $x$-axis and $y$-axis, respectively. 
  For $u \in (p_{i-1},p_i)$ and $v\in (q_{j-1},q_j)$, 
  $\partial_1 [A](C)(u,v) = a_{ij} F_i'(u) \partial_1 C \paren{F_i(u), G_j(v)} + \sum_{k=1}^{j-1} a_{ik} F_i'(u)$ 
  and so 
  \[\abs{\partial_1 [A](C)(u,v) - \partial_1 [A](D)(u,v)} = \frac{a_{ij}}{p_i - p_{i-1}} \abs{\partial_1 C \paren{F_i(u), G_j(v)} - \partial_1 D \paren{F_i(u), G_j(v)}}.\]  
  Hence, $\norm{[A](C) - [A](D)}_1^2$ is equal to 
  \begin{align*}
    & \sum_{j=1}^\ell \sum_{i=1}^k \frac{a_{ij}^2}{(p_i - p_{i-1})^2} \int_{q_{j-1}}^{q_j} \int_{p_{i-1}}^{p_i}  \abs{\partial_1 C \paren{F_i(u), G_j(v)} - \partial_1 D \paren{F_i(u), G_j(v)}}^2 \,du\,dv\\
    &= \paren{\sum_{j=1}^\ell \sum_{i=1}^k a_{ij}^2\frac{q_j - q_{j-1}}{p_i - p_{i-1}}} \norm{C-D}_1^2.
  \end{align*}
A similar proof yields $\displaystyle \norm{[A](C) - [A](D)}_2^2 = \paren{\sum_{j=1}^\ell \sum_{i=1}^k a_{ij}^2\frac{p_i - p_{i-1}}{q_j - q_{j-1}}} \norm{C-D}_2^2$.  

Now, since $a_{ij} \leq p_i - p_{i-1}$ and $\sum_{i} a_{ij} = q_j - q_{j-1}$, 
\[\sum_{j=1}^\ell \sum_{i=1}^k a_{ij}^2\frac{q_j - q_{j-1}}{p_i - p_{i-1}} \leq \sum_{j=1}^\ell \sum_{i=1}^k a_{ij}\paren{q_j - q_{j-1}} = \sum_{j=1}^\ell (q_j - q_{j-1})^2 < \sum_{j=1}^\ell (q_j - q_{j-1}) = 1,\]
where we have used the assumption that $\ell \ge 2$ in the last inequality.  In fact, the same assumption implies that $a_{ij} < p_i - p_{i-1}$ for some $j$.  The same line of proof using $k\ge 2$ shows $\sum_{j=1}^\ell \sum_{i=1}^k a_{ij}^2\frac{p_i - p_{i-1}}{q_j - q_{j-1}} < 1$ and the desired result follows.
\end{proof}

We now have that the mapping 
$[A]$ is a contraction on the class of copulas $\mathcal{C}_2$ with respect to the modified Sobolev norm.  Using the fact \cite{DarOls1995-Norms} that $\mathcal{C}_2$ is complete with respect to the modified Sobolev norm, it follows from the Contraction-Mapping Theorem that: 
\begin{thm} \label{thm:SblConvFrac}
  For each transformation matrix $A$ of dimension at least $2\times 2$, 
  and for any initial copula $D$, the sequence $\set{[A]^r(D)}_r$ 
  converges to the copula $C_A$ in the modified Sobolev norm.  
\end{thm}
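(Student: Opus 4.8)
This is essentially an application of the Banach Contraction Mapping Theorem, and the two ingredients it needs have just been assembled in the paragraph preceding the statement. The plan is as follows. First I would record, from the preceding Proposition together with the hypothesis that $A$ has dimension at least $2\times 2$, that $[A]$ maps the class of copulas $\mathcal{C}_2$ into itself and satisfies $\norm{[A](C) - [A](D)}_S \leq r\,\norm{C - D}_S$ for all copulas $C$ and $D$, with the explicit constant $r<1$; thus $[A]$ is a strict contraction of $\mathcal{C}_2$ for the metric $(C,D)\mapsto\norm{C - D}_S$. (This is a genuine metric: $\norm{C-D}_S = 0$ forces $C - D$ to be constant in each variable, hence constant, and both copulas vanish at the origin.) Second I would invoke the result of \cite{DarOls1995-Norms} that $\mathcal{C}_2$, so metrized, is complete; note that $\mathcal{C}_2$ is nonempty and in fact contains every copula, since the first partials of a copula are bounded by $1$ and hence lie in $L^2(I^2)$.

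With a complete metric space and a strict contraction in hand, the Contraction Mapping Theorem applies verbatim: $[A]$ has a unique fixed point $C^\ast\in\mathcal{C}_2$, and for every initial copula $D$ the sequence $\set{[A]^n(D)}_n$ converges to $C^\ast$ in $\norm{\cdot}_S$, with the a priori geometric bound $\norm{[A]^n(D) - C^\ast}_S \leq \dfrac{r^n}{1-r}\,\norm{[A](D) - D}_S$. It then remains only to identify $C^\ast$ with the self-similar copula $C_A$. By the construction of Fredricks, Nelsen and Rodr\'iguez-Lallena recalled above, $C_A$ is a copula with $[A](C_A) = C_A$, hence a fixed point of $[A]$ in $\mathcal{C}_2$; by the uniqueness clause just quoted, $C^\ast = C_A$. (If one prefers not to appeal to uniqueness: for each fixed $u$, Cauchy--Schwarz applied to $C(u,v) = \int_0^u \partial_1 C(t,v)\,dt$ gives $\norm{[A]^n(D)(u,\cdot) - C^\ast(u,\cdot)}_{L^2(I)} \leq \norm{[A]^n(D) - C^\ast}_S \to 0$, so $[A]^n(D)\to C^\ast$ in $L^2(I^2)$; since \cite{FreNelRod2005-Copulas} already gives $[A]^n(D)\to C_A$ uniformly and copulas are continuous, $C^\ast = C_A$ pointwise.)

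I do not expect a real obstacle here: the contraction estimate from the Proposition and the completeness of $\mathcal{C}_2$ carry all the weight, and the dimension hypothesis $k,\ell\geq 2$ is precisely what the Proposition requires to ensure $r<1$. The only point deserving a word of care is the identification $C^\ast = C_A$, i.e.\ checking that the fixed point produced here under the Sobolev-norm metric is the same object that \cite{FreNelRod2005-Copulas} produces under uniform convergence; this is immediate either from uniqueness of the fixed point or from the compatibility of the two modes of convergence indicated above.
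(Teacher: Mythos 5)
Your proposal is correct and follows essentially the same route as the paper: the preceding Proposition supplies the strict contraction constant $r<1$ (using $k,\ell\geq 2$), completeness of $\mathcal{C}_2$ under the modified Sobolev norm is quoted from \cite{DarOls1995-Norms}, and the Contraction-Mapping Theorem yields convergence to the unique fixed point, identified with $C_A$ since $[A](C_A)=C_A$. Your extra remarks on the identification of the fixed point are a harmless elaboration of what the paper leaves implicit.
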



\begin{defn}
Let $A = [a_{ij}]_{k\times \ell}$ be a transformation matrix, 
$\emptyset\neq I \subseteq \set{1,2,\dots,k}$ and $\emptyset\neq J \subseteq \set{1,2,\dots,\ell}$. 
The pair $(I,J)$ is called an \emph{invariant pair} of $A$ if $a_{ij} = 0$ for all $(i,j) \in (I\times J^c)\cup(I^c\times J)$ and $a_{ij}>0$ for some $(i,j) \in I\times J$.
Two invariant pairs $(I_1,J_1)$ and $(I_2,J_2)$ of $A$ are called \emph{disjoint} if $I_1\cap I_2 = \emptyset$ and $J_1\cap J_2 = \emptyset$.  It follows that if they are not disjoint then both $I_1\cap I_2$ and $J_1\cap J_2$ are not empty.

We say that $A$ is \emph{disjointly decomposable} 
 if $A$ has a finite number of pairwise disjoint invariant pairs $(I_1,J_1), (I_2,J_2), \dots, (I_N,J_N)$ 
 such that $\bigcup_{n=1}^N I_n = \set{1,2,\dots,k}$ and $\bigcup_{n=1}^N J_n = \set{1,2,\dots,\ell}$.  
For each $n = 1,2,\dots,N$, let us denote by $A_n$ the $k\times\ell$ matrix whose $(i,j)$th entry is $a_{ij}$ if $i\in I_n$ and $j\in J_n$ and is equal to zero otherwise.  
Observe that $\displaystyle A = \sum_{n=1}^N A_n$ and in particular, every non-zero entry in $A$ appears in exactly one $A_n$. 
We also say that $A$ is \emph{disjointly decomposable} as the sum $\sum_{n=1}^N A_n$ or \emph{disjointly decomposable} by $N$ invariant pairs.

Such a disjoint decomposition of $A$ gives rise to two partitions of $[0,1]$: $\set{Q_n}_{n=1}^N$ and $\set{P_n}_{n=1}^N$ defined by
$Q_n = \bigcup_{j\in J_n} (q_{j-1},q_j)$ and $P_n = \bigcup_{i\in I_n} (p_{i-1},p_i)$. Then $\lambda(Q_n)>0$ and $\lambda(P_n)>0$ for all $n=1,\dots,N$.
Each pair $(I_n,J_n)$ induces set functions $\mathscr{G}_n$ and $\mathscr{F}_n$ mapping $B \in \mathscr{B}$ to 
\[\mathscr{G}_n(B) \equiv \bigcup_{j\in J_n} G_j^{-1}(B) \subseteq Q_n \quad \text{and}\quad 
\mathscr{F}_n(B) \equiv \bigcup_{i\in I_n} F_i^{-1}(B) \subseteq P_n.\]  Note that $G_j^{-1}((0,1)) = (q_{j-1},q_j)$ and $F_i^{-1}((0,1)) = (p_{i-1},p_i)$.
\end{defn}

\begin{remk}
 If $A$ has $N$ invariant pairs $(I_1,J_1), (I_2,J_2), \dots, (I_N,J_N)$, not necessarily disjoint, such that $\bigcup_{n=1}^N I_n = \set{1,2,\dots,k}$ and $\bigcup_{n=1}^N J_n = \set{1,2,\dots,\ell}$ then $A$ is disjointly decomposable by $N'\geq N$ invariant pairs.  In fact, without loss of generality, if $(I_1,J_1)$ and $(I_2,J_2)$ are not disjoint then $I' \equiv I_1\cap I_2 \neq \emptyset$ and $J' \equiv J_1\cap J_2 \neq \emptyset$ and it can be shown that $(I_1\setminus I_2, J_1\setminus J_2), (I_2\setminus I_1, J_2\setminus J_1), (I',J')$ are pairwise disjoint invariant pairs. \draftnote{Proof?}  Replacing $(I_1,J_1), (I_2,J_2)$ with these three pairs gives us a finer list of invariant pairs.  Then repeat the process until all invariant pairs are pairwise disjoint.  
\end{remk}

\begin{lem} \label{lem:A(C)}
  If $(I,J)$ is an invariant pair of a transformation matrix $A$ and $S,R\in\mathscr{B}$ are such that $T_C\one_S = \one_R$, then $T_{[A](C)}\one_{\bigcup_{j\in J}G_j^{-1}(S)} = \one_{\bigcup_{i\in I}F_i^{-1}(R)}.$
\end{lem}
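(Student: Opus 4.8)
The plan is to compute $T_{[A](C)}\one_{\tilde S}$ directly from the identity \eqref{eq:T_AC}, where for brevity I write $\tilde S=\bigcup_{j\in J}G_j^{-1}(S)$ and $\tilde R=\bigcup_{i\in I}F_i^{-1}(R)$, and where I fix representatives of $S$ and $R$ avoiding the endpoints $\{0,1\}$ (permissible since all set equalities are mod $\lambda$-null sets).

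First I would identify $\one_{\tilde S}\circ G_{j_0}^{-1}$ for each $j_0\in\{1,\dots,\ell\}$. Since $G_{j_0}^{-1}$ is, up to a set of measure zero, an increasing affine bijection of $(0,1)$ onto $(q_{j_0-1},q_{j_0})$, and the intervals $(q_{j-1},q_j)$, $j=1,\dots,\ell$, are pairwise essentially disjoint, the point $G_{j_0}^{-1}(s)$ lies in $\tilde S$ exactly when $j_0\in J$ and $s\in S$. Hence $\one_{\tilde S}\circ G_{j_0}^{-1}=\one_S$ if $j_0\in J$ and $\equiv 0$ otherwise. Substituting this into \eqref{eq:T_AC} and using the hypothesis $T_C\one_S=\one_R$ gives
\[
 T_{[A](C)}\one_{\tilde S}(x)=\sum_{i=1}^k\sum_{j\in J}\frac{a_{ij}}{\triangle p_i}\,T_C(\one_S)\bigl(F_i(x)\bigr)=\sum_{i=1}^k\frac{\one_R\bigl(F_i(x)\bigr)}{\triangle p_i}\sum_{j\in J}a_{ij}.
\]
Now the invariant-pair structure collapses the column sums: the full column sum is $\sum_{j=1}^\ell a_{ij}=\triangle p_i$, so for $i\in I$ (where $a_{ij}=0$ whenever $j\in J^c$) one has $\sum_{j\in J}a_{ij}=\triangle p_i$, while for $i\in I^c$ (where $a_{ij}=0$ whenever $j\in J$) one has $\sum_{j\in J}a_{ij}=0$. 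Thus $T_{[A](C)}\one_{\tilde S}=\sum_{i\in I}\one_R\circ F_i$. Since $0,1\notin R$, each $\one_R\circ F_i$ equals $\one_{F_i^{-1}(R)}$ a.e.\ and vanishes off $(p_{i-1},p_i)$; as the sets $F_i^{-1}(R)\subseteq(p_{i-1},p_i)$ are essentially disjoint, the sum is $\one_{\tilde R}$, which is the claim.

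The genuinely routine parts are the affine-bijection bookkeeping for $F_i$ and $G_j$; the only points needing care are the boundary points of the partitions (a measure-zero matter, handled by working modulo $\lambda$-null sets and choosing endpoint-free representatives) and invoking the column-sum identity $\sum_{j}a_{ij}=\triangle p_i$ together with the invariant-pair vanishing pattern in the correct order. I do not expect any obstacle beyond this bookkeeping.
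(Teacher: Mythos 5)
Your proposal is correct and follows essentially the same route as the paper: compute $T_{[A](C)}\one_{\bigcup_{j\in J}G_j^{-1}(S)}$ via \eqref{eq:T_AC}, note that $\one_{\bigcup_{j'\in J}G_{j'}^{-1}(S)}\circ G_j^{-1}$ is $\one_S$ for $j\in J$ and $0$ otherwise, and then use the invariant-pair vanishing pattern together with the column-sum identity $\sum_j a_{ij}=\triangle p_i$ to collapse the sum to $\sum_{i\in I}\one_R\circ F_i=\one_{\bigcup_{i\in I}F_i^{-1}(R)}$. The extra care you take with partition endpoints is a measure-zero matter the paper handles implicitly, so there is no substantive difference.
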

\begin{proof}
For $j\in J$, we have $\one_{\bigcup_{j'\in J}G_{j'}^{-1}(S)}\circ G_j^{-1}(x) = \one_S(x)$; otherwise, $\one_{\bigcup_{j'\in J}G_{j'}^{-1}(S)}\circ G_j^{-1}(x) = 0$.  So, by \eqref{eq:T_AC},
  \begin{align*}
    T_{[A](C)}\one_{\bigcup_{j\in J}G_j^{-1}(S)} 
 &= \sum_{i=1}^k \sum_{j=1}^\ell \frac{a_{ij}}{\triangle p_i} T_C\left(\one_{\bigcup_{j'\in J}G_{j'}^{-1}(S)}\circ G_j^{-1}\right)\left(F_i(x)\right)\\
 &= \sum_{i=1}^k \sum_{j\in J} \frac{a_{ij}}{\triangle p_i} T_C(\one_{S})(F_i(x))\\
 &= \sum_{i\in I}  \frac{\sum_{j\in J} a_{ij}}{\triangle p_i} T_C(\one_{S})(F_i(x)),
  \end{align*}
where we have used the assumption that $a_{ij} = 0$ if $i\notin I$ and $j\in J$.
Since for each $i\in I$, $a_{ij}=0$ for all $j\notin J$, the sum $\sum_{j\in J} a_{ij}$ is equal to $\triangle p_i$ and 
\[T_{[A](C)}\one_{\bigcup_{j\in J}G_j^{-1}(S)} = \sum_{i\in I} T_C(\one_{S})(F_i(x)) =  \sum_{i\in I} \one_R(F_i(x)) = \one_{\bigcup_{i\in I} F_i^{-1}(R)}(x).\qedhere\]
\end{proof}

%
 
\begin{lem} \label{lem:SnA(C)}
Let $C$ be a copula.  
Suppose a transformation matrix $A$ is disjointly decomposable by $N$ invariant pairs.
  If $T_{[A](C)}\one_S = \one_R$ 
  then for $n=1,\dots,N$, $T_{[A](C)}\one_{S_n} = \one_{R_n}$ where $S_n \equiv S\cap Q_n$ and $R_n\equiv R\cap P_n$. 
\end{lem}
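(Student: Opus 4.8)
The plan is to reduce the statement to Lemma~\ref{lem:A(C)} applied with trivial data, followed by the fact that $\sigma_{[A](C)}$ is closed under intersections.

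\textbf{Step 1: each $Q_n$ belongs to $\sigma_{[A](C)}$ with image $P_n$.} Since $(I_n,J_n)$ is an invariant pair of $A$, and since $T_C\one_{(0,1)} = T_C\one = \one = \one_{(0,1)}$ (recall all set equalities are modulo $\lambda$-null sets), I would apply Lemma~\ref{lem:A(C)} with the set $(0,1)$ in place of both sets there, obtaining
\[ T_{[A](C)}\one_{\bigcup_{j\in J_n}G_j^{-1}((0,1))} = \one_{\bigcup_{i\in I_n}F_i^{-1}((0,1))}. \]
Because $G_j^{-1}((0,1)) = (q_{j-1},q_j)$ and $F_i^{-1}((0,1)) = (p_{i-1},p_i)$, the left-hand side is $\one_{Q_n}$ and the right-hand side is $\one_{P_n}$. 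Hence $T_{[A](C)}\one_{Q_n} = \one_{P_n}$ for every $n$.

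\textbf{Step 2: intersect with the given pair.} The operator $T_{[A](C)}$ is a Markov operator, and by hypothesis $T_{[A](C)}\one_S = \one_R$. Combining this with Step~1 and Proposition~\ref{prop:sigma}\eqref{sigmacap} (applied to $T_{[A](C)}$ and the Borel sets $S$ and $Q_n$) yields $T_{[A](C)}\one_{S\cap Q_n} = \one_{R\cap P_n}$, which is exactly $T_{[A](C)}\one_{S_n} = \one_{R_n}$, as desired.

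I do not expect any genuine obstacle here; the argument is essentially two lines once Step~1 is in place. The only point requiring care is to invoke Lemma~\ref{lem:A(C)} with the open ``interiors'' $(0,1)$, $(q_{j-1},q_j)$, $(p_{i-1},p_i)$, so that $\mathscr{G}_n((0,1)) = Q_n$ and $\mathscr{F}_n((0,1)) = P_n$ on the nose; this is harmless since $(0,1)=[0,1]$ modulo a null set. It is worth stressing why one cannot simply feed $S$ itself into Lemma~\ref{lem:A(C)}: the set $\bigcup_{j\in J_n}G_j^{-1}(S)$ is a union of \emph{rescaled} copies of $S$ sitting inside $Q_n$, not the restriction $S\cap Q_n$, so it is precisely the $\sigma$-algebra structure of $\sigma_{[A](C)}$ (Proposition~\ref{prop:sigma}) that bridges this gap. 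Conceptually, Steps~1--2 are a manifestation of the ``block-diagonal'' identity $T_{[A](C)}(f\one_{Q_n}) = \one_{P_n}\cdot T_{[A](C)}f$ for $f\in L^1$, which one could alternatively derive directly from \eqref{eq:T_AC} together with the vanishing of $a_{ij}$ off $\bigcup_n (I_n\times J_n)$; but the route via Lemma~\ref{lem:A(C)} and Proposition~\ref{prop:sigma}\eqref{sigmacap} is the most economical.
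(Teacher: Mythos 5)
Your proof is correct, and it takes a genuinely different route from the paper's. The paper argues by a sandwich-and-sum: positivity gives $T_{[A](C)}\one_{S_n}\leq \one_R$; a direct computation with the patched-operator formula \eqref{eq:T_AC} shows $T_{[A](C)}\one_{S_n}$ vanishes a.e.\ off $P_n$, hence $T_{[A](C)}\one_{S_n}\leq\one_{R_n}$; and then summing over $n$ and using that the $Q_n$'s and $P_n$'s partition $[0,1]$ (this is where disjoint decomposability enters) forces equality for every $n$ simultaneously. You instead observe that $T_C\one_{(0,1)}=\one_{(0,1)}$ a.e., so Lemma~\ref{lem:A(C)} applied to the pair $\paren{(0,1),(0,1)}$ gives $T_{[A](C)}\one_{Q_n}=\one_{P_n}$ (your care with the open sets, so that $G_j^{-1}((0,1))=(q_{j-1},q_j)$ rather than $G_j^{-1}([0,1])=[0,1]$, is exactly the right point to watch), and then Proposition~\ref{prop:sigma}\eqref{sigmacap} applied to the pairs $(S,R)$ and $(Q_n,P_n)$ yields $T_{[A](C)}\one_{S\cap Q_n}=\one_{R\cap P_n}$ at once. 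Your argument buys two things: it avoids re-deriving the off-$P_n$ vanishing from \eqref{eq:T_AC} by routing everything through results already established, and it is local in $n$ --- it only needs each $(I_n,J_n)$ to be an invariant pair, not that the pairs are pairwise disjoint or that they exhaust all rows and columns, whereas the paper's summation step genuinely uses the full partition. What the paper's computation buys in exchange is the explicit "block" vanishing statement, which is the same mechanism hiding inside the proof of Lemma~\ref{lem:A(C)} that you invoke, and which your closing remark about $T_{[A](C)}(f\one_{Q_n})=\one_{P_n}\cdot T_{[A](C)}f$ correctly identifies as the common core.
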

\begin{proof}
  For each $n=1,\dots,N$, the positivity of $T_{[A](C)}$ implies that $T_{[A](C)}\one_{S_n} \leq T_{[A](C)}\one_{S} = \one_R$.  If it holds that $T_{[A](C)}\one_{S_n} = 0$ for a.e.~$x$ not in $P_n$, then $T_{[A](C)}\one_{S_n} \leq \one_{R\cap P_n} = \one_{R_n}$.  Summing over all $n$ gives \[\one_R = T_{[A](C)}\one_S = \sum_n T_{[A](C)}\one_{S_n} \leq \sum_n \one_{R_n} = \one_R\] and hence $T_{[A](C)}\one_{S_n} = \one_{R_n}$ for all $n$.
We then prove the claim. If $x \notin P_n \equiv \bigcup_{i'\in I_n} (p_{i'-1},p_{i'})$, then $F_i(x) = 0$ or $1$ for all $i\in I_n$ and 
\begin{align*}
T_{[A](C)}\one_{S_n}(x) 
&= \sum_{i\in I_n} \sum_{j\in J_n} \frac{a_{ij}}{\triangle p_i} T_C\paren{\sum_{j'\in J_n}\one_{S\cap(q_{j'-1},q_{j'})}\circ G_j^{-1}}(F_i(x))\\
&=  \sum_{i\in I_n} \sum_{j\in J_n} \frac{a_{ij}}{\triangle p_i} T_C\paren{\one_{S\cap(q_{j-1},q_{j})}\circ G_j^{-1}}(F_i(x)) = 0.
\end{align*}
We use the convention that $0$ and $1$ are not in the support of all functions.
\end{proof}

\begin{thm} \label{thm:C_A=nonatomic}
  Suppose a transformation matrix $A$ is disjointly decomposable by $N\geq 2$ invariant pairs. Then the invariant copula $C_A$ is non-atomic. 
\end{thm}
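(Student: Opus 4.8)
The plan is to produce, inside both $\sigma_{C_A}$ and $\sigma^*_{C_A}$, a sequence of finite partitions of $I$ whose mesh tends to zero, which immediately precludes atoms. Fix a disjoint decomposition $A = \sum_{n=1}^{N} A_n$ by invariant pairs $(I_n,J_n)$ with $N \geq 2$, let $\{Q_n\}_{n=1}^{N}$ and $\{P_n\}_{n=1}^{N}$ be the induced partitions of $I$, and let $\mathscr{G}_n$, $\mathscr{F}_n$ be the associated set functions. I would first record the elementary facts: since $\triangle q_j = \sum_i a_{ij}$ and $a_{ij} = 0$ for $(i,j) \in I_n^c \times J_n$, we get $\lambda(Q_n) = \lambda(P_n) = m_n := \sum_{i\in I_n,\, j\in J_n} a_{ij}$; each of $\mathscr{G}_n$ and $\mathscr{F}_n$ is monotone, commutes with countable unions, sends essentially disjoint sets to essentially disjoint sets, scales Lebesgue measure by the factor $m_n$, and satisfies $\mathscr{G}_n(I) = Q_n$ and $\mathscr{F}_n(I) = P_n$ modulo null sets; and $\sum_{n=1}^{N} m_n = 1$, so $\bar m := \max_n m_n < 1$ because $N \geq 2$ and each $m_n > 0$.

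The recursion engine is Lemma \ref{lem:A(C)} with the inner copula taken to be $C_A$ itself: since $[A](C_A) = C_A$, whenever $T_{C_A}\one_S = \one_R$ one also has $T_{C_A}\one_{\mathscr{G}_n(S)} = \one_{\mathscr{F}_n(R)}$ for every $n$. Starting from $T_{C_A}\one_I = \one_I$ and iterating $r$ times along a word $w = (n_1,\dots,n_r) \in \{1,\dots,N\}^r$, I would set $Q_w := \mathscr{G}_{n_1}(\mathscr{G}_{n_2}(\cdots \mathscr{G}_{n_r}(I)\cdots))$ and $P_w$ analogously, obtaining $T_{C_A}\one_{Q_w} = \one_{P_w}$; in particular $Q_w \in \sigma_{C_A}$ and $P_w \in \sigma^*_{C_A}$. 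A straightforward induction on $r$ — using that the $\mathscr{G}_n$ are monotone, commute with unions, preserve essential disjointness and scale measure by $m_n$, and that $\{Q_n\}_n$ partitions $I$ — then shows that for each fixed $r$ the family $\{Q_w : w \in \{1,\dots,N\}^r\}$ is again a partition of $I$ into members of $\sigma_{C_A}$, with $\lambda(Q_w) = m_{n_1} m_{n_2} \cdots m_{n_r} \leq \bar m^{\,r}$; the same holds for $\{P_w\}$ inside $\sigma^*_{C_A}$.

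Finally I would rule out atoms. Suppose $E \in \sigma_{C_A}$ with $\lambda(E) > 0$ were an atom, and fix $r$. Each $Q_w$ with $w$ of length $r$ lies in $\sigma_{C_A}$, so $\lambda(E \cap Q_w) \in \{0, \lambda(E)\}$; since these sets partition $I$, the numbers $\lambda(E \cap Q_w)$ sum to $\lambda(E) > 0$, hence exactly one of them equals $\lambda(E)$, so $E$ is contained, up to a null set, in a single $Q_w$, giving $\lambda(E) \leq \bar m^{\,r}$. As $r$ is arbitrary and $\bar m < 1$, this forces $\lambda(E) = 0$, a contradiction; thus $\sigma_{C_A}$ has no atoms, and the verbatim argument applied to the partitions $\{P_w\}$ shows $\sigma^*_{C_A}$ has no atoms, so $C_A$ is non-atomic. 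The only mildly delicate point is the induction in the second paragraph — checking that the level-$r$ sets genuinely tile $I$, remain in $\sigma_{C_A}$, and shrink geometrically — but this reduces entirely to the bookkeeping properties of $\mathscr{G}_n$ and $\mathscr{F}_n$ collected in the first step; everything else is a direct application of Lemma \ref{lem:A(C)} together with the definition of an atom.
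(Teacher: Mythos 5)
Your proof is correct, but it takes a genuinely different route from the paper's. You never use Lemma \ref{lem:SnA(C)}: instead you iterate the ``forward'' Lemma \ref{lem:A(C)} at the fixed point $C_A=[A](C_A)$, starting from $T_{C_A}\one=\one$, to manufacture for every $r$ an essentially disjoint family $\set{Q_w}_{w\in\set{1,\dots,N}^r}$ of sets of $\sigma_{C_A}$ covering $I$ up to a null set with $\lambda(Q_w)=m_{n_1}\cdots m_{n_r}\le \bar m^{\,r}$, together with the corresponding family $\set{P_w}$ in $\sigma^*_{C_A}$, and then finish with the general observation that a sub-$\sigma$-algebra containing partitions of arbitrarily small mesh cannot have an atom; the bound $\bar m<1$ is exactly where $N\ge 2$ and the positivity of each $m_n$ enter. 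The paper argues in the opposite direction: it takes an arbitrary pair $T_{C_A}\one_S=\one_R$ with $\lambda(S)>0$ and uses Lemma \ref{lem:SnA(C)} (whose proof needs the positivity/support argument) to split $S$ along the blocks inside $\sigma_{C_A}$; either some piece has measure strictly between $0$ and $\lambda(S)$, which already rules out atomicity, or all the mass lies in one block and one recurses, the recursion terminating because $\lambda(Q_{n_1})\lambda(Q_{n_2})\cdots\to 0$. Your version is more self-contained (it only needs the easy push-forward lemma plus elementary bookkeeping for the iterated $\mathscr{G}_n,\mathscr{F}_n$), while the paper's version yields a bit more structural information about how an arbitrary set of $\sigma_{C_A}$ localizes to the fractal cells. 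One small caution in your construction: run the iteration on $(0,1)$ rather than $[0,1]$, or read $G_j^{-1}$ as the quantile map onto $[q_{j-1},q_j]$; with the literal preimage reading one has $G_j^{-1}([0,1])=[0,1]$, so $\mathscr{G}_n(I)$ would not shrink to $Q_n$. This is the same endpoint convention the paper itself adopts (``$0$ and $1$ are not in the support''), and with it your induction on the level-$r$ cells goes through as you describe.
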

\begin{proof}
  Let $S,R \in\mathscr{B}$ be such that $T_{C_A}\one_S = \one_R$ and $\lambda(S) = \lambda(R) > 0$.  Since $C_A = [A](C_A)$, by Lemma \ref{lem:SnA(C)}, $T_{C_A}\one_{S_{n}} = \one_{R_{n}}$ for all $n=1,2,\dots,N$ where $S_{n} \equiv \mathscr{G}_n(S) \subseteq Q_{n}$ and $R_{n} \equiv \mathscr{F}_n(R) \subseteq P_{n}$ are such that $S = \bigcup_{n} S_{n}$ and $R=\bigcup_{n} R_{n}$.  If  
one of the $\lambda(S_{n})$'s is strictly between $0$ and $\lambda(S)$ then $S$ and $R$ are not atoms of $\sigma_{C_A}$ and $\sigma^*_{C_A}$, respectively.  Otherwise, there is an $n_1$ such that $\lambda(S_{n_1})=\lambda(S)$ and we repeat the process by applying Lemma \ref{lem:SnA(C)} to $T_{C_A}\one_{S_{n_1}} = \one_{R_{n_1}}$ and obtain $S_{n_1,n}\equiv \mathscr{G}_n(S_{n_1}) \subseteq \mathscr{G}_n(Q_{n_1})$ and $R_{n_1,n} \equiv \mathscr{F}_n(R_{n_1}) \subseteq \mathscr{F}_n(P_{n_1})$ are such that $S_{n_1} = \bigcup_{n} S_{n_1,n}$, $R_{n_1} = \bigcup_{n} R_{n_1,n}$ and $T_{C_A}\one_{S_{n_1,n}} = \one_{R_{n_1,n}}$ for all $n=1,2,\dots,N$.  If some $\lambda(S_{n_1,n})$ lies between $0$ and $\lambda(S)$ then we are done.  Otherwise, there must be an $n_2$ such that $\lambda(S_{n_1,n_2})=\lambda(S)$.  This process will certainly stop because $\lambda(\mathscr{G}_{n_k}( \cdots(\mathscr{G}_{n_2}(Q_{n_1})))) = \lambda(Q_{n_k})\cdots\lambda(Q_{n_2})\lambda(Q_{n_1}) \to 0$ as $k\to\infty$ and $\lambda(S) > 0$.
\end{proof}

\begin{exam} \label{exam:nonatomic}
Let $A_1 = K_0 + K_1$ and $A_2 = K_0 + K_2$ where 
\[ K_0 = \begin{bmatrix} 0 & 0 & 0\\ 0 & 1/3 & 0\\ 0 & 0 & 0 \end{bmatrix},\; 
   K_1 = \begin{bmatrix} 1/12 & 0 & 1/4\\ 0 & 0 & 0\\ 1/4 & 0 & 1/12 \end{bmatrix}\; \text{ and } \; 
   K_2 = \begin{bmatrix} 1/6 & 0 & 1/6\\ 0 & 0 & 0\\ 1/6 & 0 & 1/6 \end{bmatrix}.\]
Then both transformation matrices $A_1$ and $A_2$ are disjointly decomposable by $2$ invariant pairs $(\set{1,3},\set{1,3})$ and $(\set{2},\set{2})$.  By Theorem \ref{thm:C_A=nonatomic}, the invariant copulas $C_{A_1}$ and $C_{A_2}$ are non-atomic and hence, by Theorem \ref{thm:Char-non-atomic}, it is supported in the support of an implicit dependence copula.  In fact, they share the same support shown in Figure \ref{fig:1}.  Note also that both copulas are symmetric as the corresponding matrices are.  
%
We will see later that (only) $C_{A_2}$ can be factored as $L_1*L_2^t$ for some left invertible copulas $L_1,L_2$.
\begin{figure}
\includegraphics[width=\textwidth]{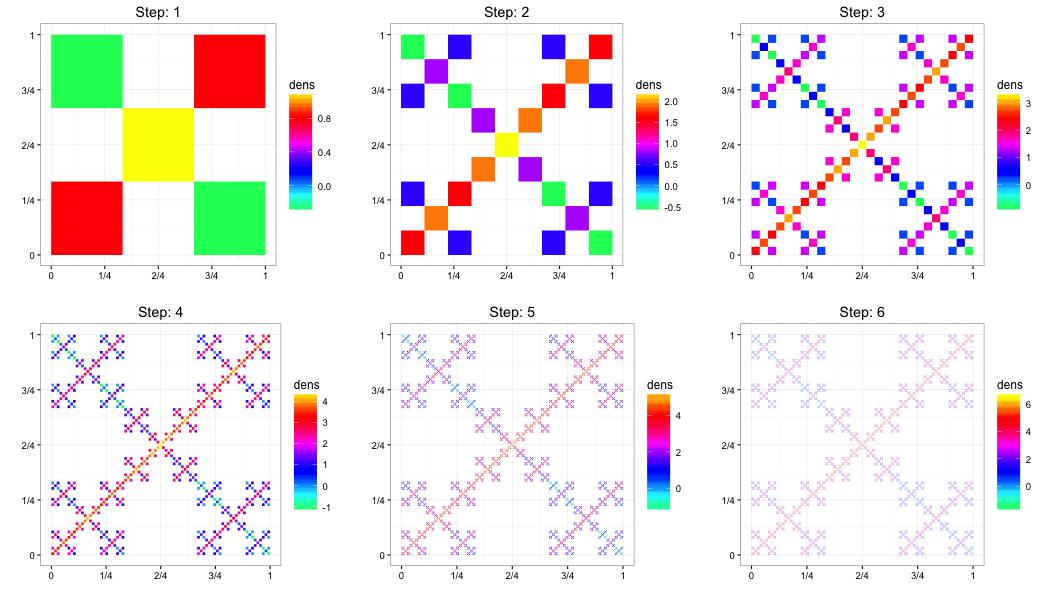} 
\caption{Supports of $[A_1]^r(\Pi)$, $r=1,2,\dots,5$}
\label{fig:1}
\end{figure}
\end{exam}

\commentout{
\begin{thm} \label{thm:sigma_AC}
  Suppose a transformation matrix $A$ is disjointly decomposable by $N$ invariant pairs.
For any copula $C$, \[\sigma_{[A](C)} = \sigma\paren{\set{\mathscr{G}_n(S)\colon n=1,2,\dots,N \text{ and } S\in\sigma_C}}. \]
\end{thm}
\begin{proof}
By Lemma \ref{lem:A(C)}, $\sigma_{[A](C)}$ contains all $\mathscr{G}_n(S)$, $n\in\set{1,\dots,N}$, $S\in \sigma_C$. 
For the other inclusion, let $T_{[A](C)}\one_S = \one_R$.  
Write $S=\dot\cup_n S_n$ and  $R=\dot\cup_n R_n$ so that as a result of Lemma \ref{lem:SnA(C)}, $T_{[A](C)} \one_{S_n} = \one_{R_n}$.

Finally, we claim that $S_n = \mathscr{G}_n(S') = \bigcup_{j\in J_n} G_j^{-1}(S')$ for some $S' \in \sigma_C$ and likewise for $R_n$.  By definition, $S_n \equiv \cup_{j\in J_n} S\cap (q_{j-1},q_j) = \cup_{j\in J_n} G_j^{-1}\paren{G_j(S\cap(q_{j-1},q_j))}$.  We have to show that $G_j(S\cap(q_{j-1},q_j)) \in \sigma_C$ and is independent of $j$.
Recall that for all $x\in (0,1)$, 
\begin{equation*}
\sum_{i\in I_n} \one_{R\cap (p_{i-1},p_i)}(x) = \one_{R_n}(x) = T_{[A](C)}\one_{S_n}(x) 
=  \sum_{i\in I_n} \sum_{j\in J_n} \frac{a_{ij}}{\triangle p_i} T_C\paren{\one_{G_j\paren{S\cap(q_{j-1},q_{j})}}}(F_i(x)).
\end{equation*}
Since $\sum_{j\in J_n} a_{ij} = \triangle p_i$, $0\leq T_C\paren{\one_{G_j\paren{S\cap(q_{j-1},q_{j})}}}(F_i(x)) \leq 1$ and  it is zero for (a.e.) all $x\notin (p_{i-1},p_i)$, it must follow that \[\one_{R\cap (p_{i-1},p_i)}(x) = \sum_{j\in J_n} \frac{a_{ij}}{\triangle p_i} T_C\paren{\one_{G_j\paren{S\cap(q_{j-1},q_{j})}}}(F_i(x))\quad\text{ for all } i\in I_n\] and that for each $i\in I_n$, $\one_{R\cap (p_{i-1},p_i)}(x) = T_C\paren{\one_{G_j\paren{S\cap(q_{j-1},q_{j})}}}(F_i(x))$ for all $j\in J_n$.  That is $T_C\paren{\one_{G_j\paren{S\cap(q_{j-1},q_{j})}}} = \one_{F_i\paren{R\cap (p_{i-1},p_i)}}$ for all $i\in I_n$, $j\in J_n$.  Clearly, all $F_i\paren{R\cap (p_{i-1},p_i)}$ are equal to some Borel $R'\subseteq [0,1]$.  Since $T_C$ is injective, all $G_j\paren{S\cap(q_{j-1},q_{j})}$ are equal to some $S'\in\mathscr{B}((0,1))$.  Finally, $S' \in \sigma_C$ is such that $S = \bigcup_n \mathscr{G}_n(S')$ as desired.
\end{proof}

Since $\sigma(\Pi)$ is the smallest $\sigma$-algebra induced by a copula, $\sigma_{[A]^k(\Pi)}$ can only get larger as $n$ increases.
\begin{cor}
  Let $A$ be a $k\times \ell$ transformation matrix which is disjointly decomposable.
 Then $\sigma_{\Pi} \subseteq \sigma_{[A](\Pi)} 
  \subseteq \dots \subseteq \sigma_{[A]^k(\Pi)} \subseteq \dots$  
and $\displaystyle \sigma_{C_A} = \cup_{k=0}^\infty  \sigma_{[A]^k(\Pi)}$.  
As a consequence, $\sigma_{C_A}$ is non-atomic.
\end{cor}
\begin{proof}
Recall that for $S_1,S_2 \in \mathscr{B}$ whose symmetric difference $S_1\triangle S_2$ has (Lebesgue) measure zero, $S_1\in\sigma_{C}$ if and only if $S_2\in\sigma_{C}$.  So we can write $\sigma_{\Pi} = \set{S\colon \lambda(S) = 0 \text{ or } 1}$ as $\iprod{\set{I}}$ where $I = [0,1]$ and $\iprod{\mathscr{E}}$ denotes the smallest $\sigma$-algebra containing Borel sets with zero measure symmetric difference with elements in $\mathscr{E}$.  Since $\mathscr{G}_n$'s preserve sets of measure zero, Theorem \ref{thm:sigma_AC} implies that $\sigma_{[A]^k(\Pi)} = \iprod{\set{\mathscr{G}_{n_1}\circ\mathscr{G}_{n_2}\circ\dots\circ\mathscr{G}_{n_k}(I)\colon 1\leq n_1,n_2,\dots,n_k \leq N}}.$ 
Thus, the chain of inclusions follows directly from the identity $I = \bigcup_{n=1}^N \mathscr{G}_n(I)$.

Claim next that 
\begin{enumerate}
  \item $\displaystyle \sigma_{C_A} = \bigcup_{k=0}^\infty  \sigma_{[A]^k(\Pi)}$; and
  \item   $\sigma_{C_A}$ is non-atomic.
\end{enumerate}
\end{proof}
\vspace{2cm}
}
A transformation matrix $L=[\lambda_{in}]_{k\times N}$ is said to be a \emph{left complete dependence matrix} if there is exactly one nonzero entry in each column, i.e.~for each $i$, there exists a positive integer $n_i\leq N$ such that $\lambda_{in} = 0$ for $n\neq n_i$.  \emph{Right complete dependence matrices} are defined similarly. 
\begin{lem} \label{lem:L(C)}
  Let $L$ be a left complete dependence matrix with dimension at least $2\times 2$ and $C$ be a left invertible copula. Then $[L](C), [L]^2(C), \dots, [L]^r(C), \dots$ are left invertible copulas converging to a left invertible copula $C_L$. 
  The same statement also holds if all occurrences of ``left'' are replaced by ``right.''
\end{lem}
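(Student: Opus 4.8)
The plan is to prove the statement for ``left'' (the ``right'' statement follows by the symmetric argument, with $T_C$, $\sigma_C$, and ``left invertible'' everywhere replaced by $T_C^*=T_{C^t}$, $\sigma_C^*$, and ``right invertible''). Recall that a copula is left invertible precisely when its associated $\sigma$-algebra is all of $\mathscr{B}$. So the whole lemma reduces to two claims: (a) if $\sigma_C=\mathscr{B}$ then $\sigma_{[L](C)}=\mathscr{B}$; and (b) if copulas $C_r$ with $\sigma_{C_r}=\mathscr{B}$ converge to $C_L$ in the modified Sobolev norm, then $\sigma_{C_L}=\mathscr{B}$. Granting these, the lemma follows at once: by (a) and induction every iterate $[L]^r(C)$ is left invertible, by Theorem~\ref{thm:SblConvFrac} (here we use $\dim L\ge 2\times 2$) the iterates converge to $C_L$ in the Sobolev norm, and by (b) the limit $C_L$ is left invertible.

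For (a): since $L=[\lambda_{in}]_{k\times N}$ is a left complete dependence matrix, the unique nonzero entry $\lambda_{in_i}$ of column $i$ equals the full column sum $\triangle p_i$, so every weight $\lambda_{in}/\triangle p_i$ in \eqref{eq:T_AC} is $0$ or $1$ and
\[ T_{[L](C)}f(x) = \sum_{i=1}^k T_C\big(f\circ G_{n_i}^{-1}\big)\big(F_i(x)\big). \]
Taking $f=\one_E$ with $E\in\mathscr{B}$, we have $\one_E\circ G_{n_i}^{-1}=\one_{S_i}$ where $S_i:=G_{n_i}\big(E\cap(q_{n_i-1},q_{n_i})\big)$, and $\sigma_C=\mathscr{B}$ provides $R_i\in\mathscr{B}$ with $T_C\one_{S_i}=\one_{R_i}$; hence $T_{[L](C)}\one_E(x)=\sum_i\one_{R_i}(F_i(x))=\one_{\bigcup_i F_i^{-1}(R_i)}(x)$, using that each $F_i$ is $\{0,1\}$-valued off $(p_{i-1},p_i)$ and that the sets $F_i^{-1}(R_i)\subseteq(p_{i-1},p_i)$ are pairwise disjoint. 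Thus $E\in\sigma_{[L](C)}$ for every $E$, i.e.\ $\sigma_{[L](C)}=\mathscr{B}$. (Alternatively: one checks directly that $L$ is disjointly decomposable by the invariant pairs $(\{i:\lambda_{im}>0\},\{m\})$, $m=1,\dots,N$, and reads off the same conclusion from Lemma~\ref{lem:A(C)} together with Proposition~\ref{prop:sigma}.)

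For (b): Sobolev convergence $C_r\to C_L$ forces $\partial_1 C_r\to\partial_1 C_L$ in $L^2(I^2)$, hence $\lambda$-a.e.\ on $I^2$ along a subsequence. Each $C_r=C_{eg_r}$ is a complete dependence copula, so $\partial_1 C_r(x,y)=T_{C_r}\one_{[0,y]}(x)=\one_{g_r^{-1}([0,y])}(x)\in\{0,1\}$ a.e.; being an a.e.\ limit of $\{0,1\}$-valued functions, $\partial_1 C_L$ is $\{0,1\}$-valued a.e.\ on $I^2$. By Fubini there is a full-measure set $Y_0\subseteq I$ such that for $y\in Y_0$ the function $x\mapsto T_{C_L}\one_{[0,y]}(x)=\partial_1 C_L(x,y)$ is $\{0,1\}$-valued a.e., i.e.\ $[0,y]\in\sigma_{C_L}$. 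Since $Y_0$ is dense and $\sigma_{C_L}$ is a $\sigma$-algebra, writing $[0,y]=\bigcap_n[0,y_n]$ with $Y_0\ni y_n\downarrow y$ shows $[0,y]\in\sigma_{C_L}$ for every $y\in[0,1]$, whence $\sigma_{C_L}\supseteq\mathscr{B}$ and $C_L$ is left invertible.

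The routine part is (a): it is essentially feeding the one-nonzero-per-column structure of $L$ into \eqref{eq:T_AC}. The real content, and the main obstacle, is (b): left invertibility ($\sigma_C=\mathscr{B}$) is not preserved under limits in any naive sense, and the argument must exploit both the rigidity of complete dependence copulas (their first partial derivative takes only the values $0$ and $1$) and the strength of Sobolev convergence, which controls $\partial_1$ in $L^2$ — unlike the mere uniform convergence originally furnished by \cite{FreNelRod2005-Copulas}. This is precisely the reason Theorem~\ref{thm:SblConvFrac} was needed.
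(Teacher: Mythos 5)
Your proof is correct, but it takes a genuinely different route from the paper's. The paper argues at the level of the $*$-product: writing $[L](C)(u,v)=\sum_i \lambda_{in_i}C\paren{F_i(u),G_{n_i}(v)}$, it computes directly that $[L](C)^t*[L](C)=M$ (using $C^t*C=M$ for the left invertible $C$), gets $[L]^r(C)^t*[L]^r(C)=M$ by induction, and passes to the limit via Theorem~\ref{thm:SblConvFrac} together with the joint continuity of the $*$-product in $\norm{\cdot}_S$, obtaining $C_L^t*C_L=M$; this also exhibits the left inverse of each iterate and of $C_L$ explicitly as its transpose. You instead work at the level of the associated $\sigma$-algebras: in (a) you feed the one-nonzero-entry-per-column structure of $L$ into \eqref{eq:T_AC} (equivalently, invoke Lemma~\ref{lem:A(C)} with the invariant pairs $(\set{i\colon\lambda_{im}>0},\set{m})$) to conclude $\sigma_{[L](C)}=\mathscr{B}$, and in (b) you show that left invertibility survives Sobolev limits because $\partial_1$ of a left invertible copula is $\set{0,1}$-valued a.e.\ and this rigidity passes to $L^2$-limits, then recover all of $\mathscr{B}$ from the sets $[0,y]$ using Proposition~\ref{prop:sigma}. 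Both of your steps lean on the proposition that $\sigma_C=\mathscr{B}$ exactly when $C$ is left invertible, whose nontrivial direction rests on the Sakai--Shimogaki multiplicativity theorem, so your argument is less self-contained than the paper's short computation and does not identify the inverse; on the other hand it avoids the $*$-product integral entirely and yields a reusable by-product, namely that the class of left invertible copulas is closed under convergence in the modified Sobolev norm, which is a clean explanation of why Theorem~\ref{thm:SblConvFrac} (rather than the mere uniform convergence of Fredricks et al.) is what the lemma needs. Your treatment of the boundary values of $F_i$ (choosing representatives of the $R_i$ avoiding $0$ and $1$) matches the convention the paper itself adopts in Lemma~\ref{lem:SnA(C)}, so there is no gap there.
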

\begin{proof}
Since the general case can be proved by induction on $r$, it suffices to show that $[L](C)$ is left invertible.
By definition, the $x$-partition of $[L](C)$ is determined by the only non-zero entry in each column: $p_0=0$ and $p_i = \sum_{i'\leq i} \lambda_{i'n_{i'}}$, $i=1,\dots,k$.  
So $[L](C)(u,v) = \sum_{i=1}^k  \lambda_{in_i} C\paren{F_i(u),G_{n_i}(v)}$ and 
\begin{align*}
 [L](C)^t * [L](C)(u,v) 
&= \sum_{i'=1}^k \sum_{i=1}^k  \lambda_{in_i} \lambda_{i'n_{i'}}  \int_0^1 \frac{\partial}{\partial t} C\paren{F_{i'}(t),G_{n_{i'}}(u)} 
\frac{\partial}{\partial t} C\paren{F_i(t),G_{n_i}(v)}\,dt\\
&= \sum_{i=1}^k  \lambda_{in_i}^2  \int_0^1 \partial_1 C\paren{F_{i}(t),G_{n_{i}}(u)} 
\partial_1 C\paren{F_i(t),G_{n_i}(v)}F_i'(t)^2\,dt\\
&= \sum_{i=1}^k  \lambda_{in_i} C^t*C(G_{n_{i}}(u),G_{n_i}(v)) \\
&= \sum_{i=1}^k  \lambda_{in_i} G_{n_{i}}(\min(u,v)) = M(u,v). 
\end{align*}  \draftnote{More explanation?}
Therefore, 
$[L](C)$ is left invertible. In general, we have $[L]^r(C)^t * [L]^r(C) = M$ and so, by taking the limit as $r\to\infty$ with respect to the modified Sobolev norm (see Theorem \ref{thm:SblConvFrac}), $C_L^t*C_L = M$.
\end{proof}


\begin{thm} \label{thm:LRfactor}
Let $A$ be a $k\times \ell$ transformation matrix, with $k,\ell\geq 2$, which is disjointly decomposable by $N\geq 2$ invariant pairs.  
If all $A_1, A_2, \dots, A_N$ have rank one, then $C_A = L*R$ 
for some left invertible copula $L$ and right invertible copula $R$.
\end{thm}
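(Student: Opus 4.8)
The plan is to manufacture from $A$ a left complete dependence matrix and a right complete dependence matrix whose associated fixed-point copulas multiply, under $*$, to $C_A$; the rank-one hypothesis is exactly what makes this work. Throughout, let $(I_n,J_n)$, $n=1,\dots,N$, be the disjoint invariant pairs decomposing $A$, let $A_n$ be the corresponding blocks, and put $c_n=\sum_{(i,j)\in I_n\times J_n}a_{ij}>0$, so $\sum_n c_n=1$. For $i\in\{1,\dots,k\}$ write $n_i$ for the unique index with $i\in I_{n_i}$, and similarly $m_j$ for the unique index with $j\in J_{m_j}$.

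First I would build the two matrices. For $i\in I_n$ set $\lambda_{in}=\sum_{j\in J_n}a_{ij}$ and for $j\in J_n$ set $\rho_{nj}=\sum_{i\in I_n}a_{ij}$; put $\lambda_{in}=0$ and $\rho_{nj}=0$ off the blocks, and let $L=[\lambda_{in}]_{k\times N}$, $R=[\rho_{nj}]_{N\times\ell}$. Since every row and column of $A$ meeting a block $I_n\times J_n$ has all its nonzero entries inside that block (the defining property of an invariant pair), the block $A_n$ is a nonnegative rank-one matrix with strictly positive row and column sums, whence $a_{ij}=\lambda_{in}\rho_{nj}/c_n$ for $(i,j)\in I_n\times J_n$. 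A routine verification shows $L$ is a left complete dependence transformation matrix and $R$ a right complete dependence transformation matrix, each of dimension at least $2\times2$ because $k,\ell,N\geq2$; hence, by Lemma \ref{lem:L(C)} and Theorem \ref{thm:SblConvFrac}, $[L]^r(M)$ converges in the modified Sobolev norm to a left invertible copula $C_L$ and $[R]^r(M)$ to a right invertible copula $C_R$. I would also record three partition identities used below: the column sums of $L$ agree with those of $A$, so $[L](\cdot)$ and $[A](\cdot)$ induce the same partition $\{p_i\}$ of the $x$-axis; the row sums of $R$ agree with those of $A$, so $[R](\cdot)$ and $[A](\cdot)$ induce the same partition $\{q_j\}$ of the $y$-axis; and the row sums of $L$ equal the column sums of $R$ equal $(c_1,\dots,c_N)$, so the $y$-partition generated by $L$ and the $x$-partition generated by $R$ are the common ``intermediate'' partition into intervals of lengths $c_1,\dots,c_N$.

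The core step is the factorization identity $[L](C)*[R](D)=[A](C*D)$ for all copulas $C,D$, which I would prove via Markov operators and formula \eqref{eq:T_AC}. Using $T_{E_1*E_2}=T_{E_1}\circ T_{E_2}$ it suffices to show $T_{[L](C)}\circ T_{[R](D)}=T_{[A](C*D)}$. Because the intermediate partitions match, evaluating $T_{[R](D)}$ by \eqref{eq:T_AC} and pulling the $n$-th intermediate interval back to $[0,1]$ gives the function $s\mapsto\sum_{j\in J_n}(\rho_{nj}/c_n)\,T_D(g\circ G_j^{-1})(s)$, with $G_j$ the uniform distribution on the $j$-th $y$-interval of $A$; feeding this into $T_{[L](C)}$ via \eqref{eq:T_AC} inserts a $T_C$ on each $x$-block, and $T_C\circ T_D=T_{C*D}$. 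Collecting the terms and substituting $a_{ij}/\triangle p_i=\rho_{n_i j}/c_{n_i}$ for $j\in J_{n_i}$ (the rank-one identity, with $a_{ij}=0$ otherwise), while using the matching $x$- and $y$-partitions, reproduces exactly the expression \eqref{eq:T_AC} for $T_{[A](C*D)}$. The only delicate point is the contribution of partition endpoints, which vanishes under the standing convention that $0$ and $1$ lie in no support, just as in the proof of Lemma \ref{lem:SnA(C)}.

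It remains to iterate and pass to the limit. Since $M*M=M$, induction on $r$ gives $[L]^r(M)*[R]^r(M)=[A]^r(M)$ for all $r$: the step is $[A]^{r+1}(M)=[A]([L]^r(M)*[R]^r(M))=[L]([L]^r(M))*[R]([R]^r(M))=[L]^{r+1}(M)*[R]^{r+1}(M)$, using the identity with $C=[L]^r(M)$, $D=[R]^r(M)$. Letting $r\to\infty$ in the modified Sobolev norm, with respect to which $*$ is jointly continuous, the left-hand side tends to $C_L*C_R$ and, by Theorem \ref{thm:SblConvFrac}, the right-hand side to $C_A$; thus $C_A=C_L*C_R$ with $C_L$ left invertible and $C_R$ right invertible. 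I expect the principal obstacle to be the bookkeeping in the factorization identity -- keeping the three interlocking partitions aligned and discarding the endpoint terms -- rather than anything conceptually deep; the rank-one hypothesis enters only, but essentially, through the identity $a_{ij}=\lambda_{in}\rho_{nj}/c_n$.
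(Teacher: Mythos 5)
Your proposal is correct and follows essentially the same route as the paper: you build the same left and right complete dependence matrices $L$ and $R$ from the rank-one factorization $a_{ij}=\lambda_{in}\rho_{nj}/c_n$ of the blocks $A_n$, establish the key identity $[L](C)*[R](D)=[A](C*D)$, iterate, and pass to the limit in the modified Sobolev norm using Lemma \ref{lem:L(C)}, Theorem \ref{thm:SblConvFrac} and the joint continuity of $*$. The only (cosmetic) difference is that you verify the key identity by composing Markov operators via \eqref{eq:T_AC}, while the paper computes the $*$-product integral directly with the intermediate uniform distributions $H_n$; both reduce to the same algebraic identity $\sum_n\lambda_{in}\rho_{nj}/|A_n|=a_{ij}$.
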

\begin{proof}
If $A_n$ has rank one, then there exist a row matrix $L_n = \brac{\lambda_{in}}_{k\times 1}$ and a column matrix $R_n = \brac{\rho_{nj}}_{1\times\ell}$ such that $|L_n| = |A_n| = |R_n|$ and 
\begin{equation} \label{eq:A_n}
A_n = \frac{1}{|A_n|}L_n R_n,
\end{equation}
where $|A|$ denotes the sum of the absolute values of all entries in a matrix $A$.   
Stacking up $L_n$'s vertically and $R_n$'s horizontally, we obtain transformation matrices 
\[L = \brac{\lambda_{in}}_{k\times N} = \brac{\begin{matrix}
L_N\\
\vdots\\
L_1
\end{matrix}}\; \text{ and } \; R = [\rho_{nj}]_{N\times \ell} = \brac{\begin{matrix}
R_1 & 
 \cdots & R_N
\end{matrix}}.\]
Since $I_n$'s are disjoint, each column of $L$ has exactly one non-zero entry.  Similarly, each row of $R$ has exactly one non-zero entry.

\commentout{
\begin{center}
\psset{xunit=3cm,yunit=3}
\begin{pspicture*}(-.2,-.2)(1.2,1.2)
  \psaxes[ticks=none,labels=none]{->}(0,0)(-1,-1)(1,1)
\psline(.7,-.03)(.7,.03)
\psline(-.03,.8)(.03,.8)
\uput[-90](.7,0){$x$}
\uput[180](0,.8){$y$}
\psline[linestyle=dashed](.7,0)(.7,.8)
\psline[linestyle=dashed](0,.8)(.7,.8)
\uput[120](.35,.4){$r$}
\pswedge[
fillstyle=solid]{.7}{0}{48}
\uput[40](.05,0){$\theta$}
  \psline[linecolor=red, linewidth=1.5pt]{-}(0,0)(.7,.8)
\uput[45](.7,.8){$P$}  
\psline[linewidth=1.5pt]{->}(0,0)(1,0)
\end{pspicture*}
\end{center}
}

We then show that $[L](C_1)*[R](C_2) = [A](C_1*C_2)$ for any copulas $C_1$ and $C_2$.  For $m=1,2,\dots,N$, denote by $H_m$ the uniform distribution on $\brac{\sum_{n=1}^{m-1}|A_n|,\sum_{n=1}^{m}|A_n|}$.  So $H_m' = \frac{1}{|A_m|}$ on its support.
For $u,v \in [0,1]$, 
\begin{align}
  & [L](C_1)*[R](C_2)(u,v) \notag \\
  &= \int_0^1 \paren{\sum_{i=1}^k \sum_{n=1}^N \lambda_{in} \partial_2 C_1\paren{F_i(u), H_n(t)}} 
  \paren{ \sum_{j=1}^\ell \sum_{m=1}^N \rho_{mj} \partial_1 C_2\paren{H_m(t), G_j(v)}} dt \notag \\
  &= \sum_{i=1}^k \sum_{j=1}^\ell \paren{\sum_{n=1}^N \frac{\lambda_{in}\rho_{nj}}{|A_n|}} C_1*C_2\paren{F_i(u), G_j(v)} \label{eq:LR=A} \\
  &= [A](C_1*C_2)(u,v). \label{main-eq:LR=A}
\end{align}
It is left to verify that the sum over $n$ in \eqref{eq:LR=A} is equal to the $(i,j)$th-element in $A$.  
Since $\sum_{n=1}^N A_n$ is a disjoint decomposition of $A = [a_{ij}]_{k\times\ell}$, the equation \eqref{eq:A_n} implies that if $(i,j) \in I_m\times J_m$ for some (unique) $m$ then \[a_{ij} = \dfrac{\text{the $(i,j)$th element of $L_mR_m$}}{|A_m|} = \dfrac{\lambda_{im}\rho_{mj}}{|A_m|} = \sum_{n=1}^N \dfrac{\lambda_{in}\rho_{nj}}{|A_n|},\]  where the last equality follows from the fact that $\lambda_{in}\rho_{nj} = 0$ if $(i,j) \notin I_n\times J_n$.
It also clearly follows from this fact that if if $(i,j) \notin I_m\times J_m$ for all $m$ then $ \sum_{n=1}^N \dfrac{\lambda_{in}\rho_{nj}}{|A_n|} = 0 = a_{ij}$.

By \eqref{main-eq:LR=A}, a proof by induction on $r = 1,2,\dots$ yields $[L]^r(C_1)*[R]^r(C_2) = [A]\paren{[L]^{r-1}(C_1) * [R]^{r-1}(C_2)} = \cdots = [A]^r(C_1*C_2)$.  So $[L]^r(E)*[R]^r(E) = [A]^r(E)$ for all $r\geq 1$ if $E$ is an idempotent copula, e.g.~$E=M$ or $\Pi$.
By Lemma \ref{lem:L(C)}, $[L]^r(M)$ are left invertible copulas and $[R]^r(M)$ are right invertible copulas.    

By Theorem \ref{thm:SblConvFrac}, with respect to the modified Sobolev norm, $[L]^r(M)$, $[R]^r(M)$ and $[A]^r(M)$ converge respectively to a left invertible copula $C_L$, a right invertible copula $C_R$ and a copula $C_A$ satisfying $[L](C_L) = C_L$, $[R](C_R) = C_R$ and $[A](C_A) = C_A$.  As a consequence of the joint continuity of the $*$-product with respect to $\norm{\cdot}_S$, we obtain $C_L *  C_R = C_A$ as desired.
\end{proof}


\begin{exam} \label{exam:r}
 Fix $r\in (0,1/2)$ and consider the transformation matrix 
 \[\begin{bmatrix} r/2 & 0 & r/2\\ 0 & 1-2r & 0\\ r/2 & 0 & r/2 \end{bmatrix} = 
 \begin{bmatrix} 0 & 0 & 0\\ 0 & 1-2r & 0\\ 0 & 0 & 0 \end{bmatrix} + 
 \begin{bmatrix} r/2 & 0 & r/2\\ 0 & 0 & 0\\ r/2 & 0 & r/2 \end{bmatrix}.\]
Both matrices on the right hand side have rank one which implies by Theorem \ref{thm:LRfactor} that the invariant copula can be factored as the product of a left invertible copula and a right invertible copula.  When $r=1/3$, the transformation matrix is $A_2$ in Example \ref{exam:nonatomic}.  The copula factors, $C_{L_2}$ and $C_{R_2}$, of $C_{A_2}$ are shown (approximately) in Figure \ref{fig:2}, where $L_2 = \begin{bmatrix}  0 & 1/3 & 0\\ 1/3 & 0 & 1/3 \end{bmatrix} = R_2^t$.
\begin{figure}
\includegraphics[width=0.32\textwidth]{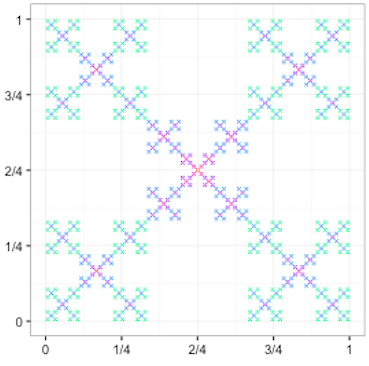} 
\includegraphics[width=0.32\textwidth]{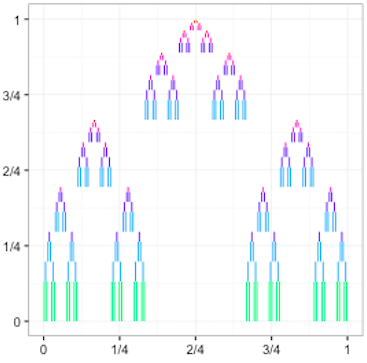} 
\includegraphics[width=0.32\textwidth]{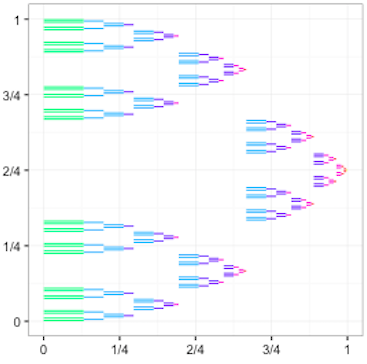} 
\caption{Supports of $[A_2]^5(\Pi)$, $[L_2]^5(\Pi)$ and $[R_2]^5(\Pi)$ in Example \ref{exam:r}}
\label{fig:2}
\end{figure}
\end{exam}

\begin{exam} \label{exam:nonsymLR}
Let us also consider the factorization of a non-symmetric invariant copula 
by first setting
\[K_3 = \begin{bmatrix} 3/28 & 0 & 27/140\\ 0 & 0 & 0\\ 1/7 & 0 & 9/35 \end{bmatrix}, \;
L_3 = \begin{bmatrix}  0 & 3/10 & 0\\ 1/4 & 0 & 9/20 \end{bmatrix}\;\text{ and } \;
R_3 = \begin{bmatrix}  3/10 & 0\\  0 & 3/10\\ 4/10 & 0\end{bmatrix}.\]
Then the non-symmetric transformation matrix $A_3 = K_0 + K_3$ is disjointly decomposable by $2$ invariant pairs $(\set{1,3},\set{1,3})$ and $(\set{2},\set{2})$.  Since $K_0$ and $K_3$ have rank one, it follows from Theorem \ref{thm:LRfactor} and its proof that $C_{A_3} = C_{L_3}*C_{R_3}$.  Approximations of their supports are illustrated in Figure \ref{fig:3}.
\begin{figure}
\includegraphics[width=0.32\textwidth]{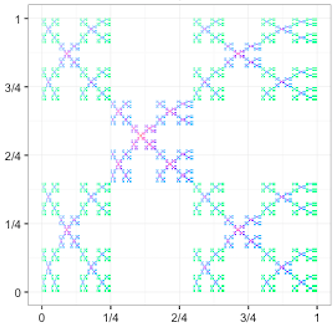} 
\includegraphics[width=0.32\textwidth]{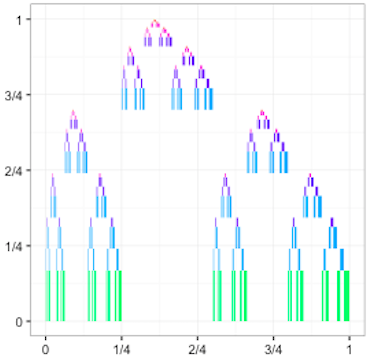} 
\includegraphics[width=0.32\textwidth]{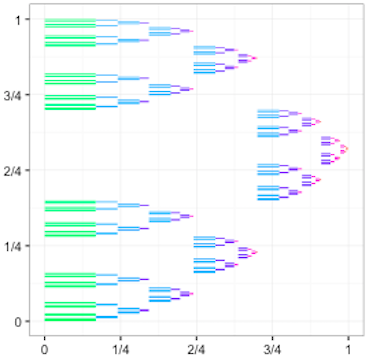} 
\caption{Supports of $[A_3]^5(\Pi)$, $[L_3]^5(\Pi)$ and $[R_3]^5(\Pi)$ in Example \ref{exam:nonsymLR}}
\label{fig:3}
\end{figure}
\end{exam}

\section{Acknowledgements}
The author is grateful for the financial support from the Commission on Higher Education and the Thailand Research Fund through grant no.~RSA5680037.  We would also like to thank Wolfgang Trutschnig and Anusorn Junchuay for sharing and helping with the R codes used to produce copulas with fractal supports and their copula factors.


\commentout{
\section{Copulas with hairpin supports}
 
Let $h\colon[0,1]\to[0,1]$ be an increasing homeomorphism satisfying $h(x) < x$ for all $x\in(0,1)$ and $\mathscr{H}$ denote the class of such homeomorphisms.  The union of the graphs of $h$ and $h^{-1}$ , denoted simply by $h\cup h^{-1}$, is called a \emph{hairpin}.  A copula $C$ is said to be supported in a hairpin or a \emph{hairpin copula} if $\Supp C \subseteq h\cup h^{-1}$ for some $h\in\mathscr{H}$.    Seethoff and Shiflett \cite{SeeShi1978-Doubly} showed that for every $h\in\mathscr{H}$ there is at most one copula $C$ whose support is a subset of $h\cup h^{-1}$.  For more details on uniqueness and existence of hairpin copulas, see \cite{SeeShi1978-Doubly, SheTay1988-Doubly}. 

%

\begin{thm}
  Let $C$ be a copula whose support is contained in $h\cup h^{-1}$ for some $h\in\mathscr{H}$. 
  Then $C$ is non-atomic and hence $C$ is an idempotent copula. 
\end{thm}
\begin{proof}
Let $C$ be a copula whose support is contained in $h\cup h^{-1}$.  Claim that $\sigma_C = \set{\bigcup_{n\in\Z} h^{2n}(B)\colon B\in\mathscr{B}([0,1])}$.  
Recall that \cite{DurSanTru2014-Multivariate} $C$ is a copula whose support is contained in $h\cup h^{-1}$ if and only if its Markov operator satisfies 
\[T_C \one_S (x) = w(x) \one_S(h(x)) + (1-w(x)) \one_S(h^{-1}(x))\;\text{a.e.}\]
for some Borel function $w\colon [0,1]\to[0,1]$.  So if $S\in \sigma_C$, i.e.~$T_C\one_S = \one_R$ for some Borel set $R$, then  
$\one_R (x) = w(x) \one_{h^{-1}(S)}(x)+ (1- w(x)) \one_{h(S)}(x)$, which implies that $R=h^{-1}(S) = h(S)$.  Hence $\bigcup_{n\in\Z} h^{2n}(S) = \bigcup_{n\in\Z} S = S$.  Conversely, for any Borel set $B$, put $S = \bigcup_{n\in\Z} h^{2n}(B)$ and compute 
\[h^{-1}(S) = \bigcup_{n\in\Z} h^{2n-1}(B) = \bigcup_{n\in\Z} h^{2n+1}(B) = h(S).\]  So $T_C \one_S (x) = \one_{h(S)}(x)$ as desired.

To avoid overlapping among $h^{2n}(B)$, $n\in\Z$, we can pick $0<b<1$ and write \[\sigma_C = \set{\dot\bigcup_{n\in\Z} h^{2n}(E)\colon E\in\mathscr{B}([h^2(b),b))}.\]  In fact, for every Borel $B\subseteq [0,1]$, we can write $B = \dot\bigcup_{k\in\Z} B\cap [h^{2k+2}(b),h^{2k}(b))$ and 
$\bigcup_{n\in\Z} h^{2n}(B) = \bigcup_{n\in\Z} h^{2n}(E)$ where $E = \bigcup_{k\in\Z} h^{-2k}\paren{B\cap [h^{2k+2}(b),h^{2k}(b))}$.

For  $E\in\mathscr{B}([h^2(b),b))$, if $\dot\bigcup_{n\in\Z} h^{2n}(E)$ has positive Lebesgue measure then the set $E$ must also have positive measure.  
Then for any Borel subset $E'$ of $E$ such that $0 < \lambda(E') < \lambda(E)$, the union $\dot\bigcup_{n\in\Z} h^{2n}(E')$ is an element in $\sigma_C$ with strictly smaller but still positive Lebesgue measure.  And we have proved that $\sigma_C$ is non-atomic.  It can also be verified in a similar manner that $\sigma^*_C =  \set{\dot\bigcup_{n\in\Z} h^{2n+1}(E)\colon E\in\mathscr{B}([h^2(b),b))}$ is non-atomic.  Hence, $C$ is non-atomic.

By Theorem \ref{thm:Char-non-atomic}, $C$ is supported in $\graph\set{f=g}$ for some measure-preserving functions $f,g$.
\textbf{But we need \[\Supp C = \graph\set{f=g} \text{ or something like that}\] 
to prove that $\graph\set{f=g} \subseteq h\cup h^{-1}$!!, from which it follows that $C_{ef}*C_{ge}$ (whose support is $\graph\set{f=g}$) is equal to $C$.  We have used the fact that $C$ is the only copula supported in $h\cup h^{-1}$.}

Then, since $C$ is symmetric, $f$ must be equal to $g$ a.e.~and hence $C=C_{ef}*C_{ef}^t$ is a an idempotent copula.
\end{proof}
\begin{remk}
  It would be very interesting to find a left invertible copula $L$, or equivalently a measure-preserving Borel function $f$ such that $L=C_{ef}$, for which $C=L*L^t$.  Using the fact derived in the above proof that $R=h(S)$ and Proposition \ref{prop:graphf=g},
  we have $\one_{h(S)} = T_C\one_S = \one_S$ for all $S\in f^{-1}(\mathscr{B})$.
  We then need to find a measure-preserving function $f$ such that $h(f^{-1}(B)) = f^{-1}(B)$ for all $B\in\mathscr{B}$, 
which is equivalent to $f^{-1}(\mathscr{B}) = \set{\bigcup_{n\in\Z} h^{2n}(B)\colon B\in\mathscr{B}([h^2(\frac{1}{2}),\frac{1}{2}])}$ \textbf{?}  
  
  Let us define 
  $\gamma\colon [h^2(\frac{1}{2}),\frac{1}{2}] \to \mathscr{B}([0,1])$ by $\gamma(t) =  \bigcup_{n\in\Z} h^{2n}([h^2(\frac{1}{2}),t])$
  Then $|\gamma(t)| \in [0,1]$.  
  A procedure to draw graphical approximations of such a measure-preserving function $f$ goes like this: 
  Fix $K=10$ and $N=5$.  
  \begin{enumerate}
    \item Partition $[h^2(\frac{1}{2}),\frac{1}{2}]$ into $2^K$ equal subintervals by \[\set{h^2\paren{\frac{1}{2}}=c_0<c_1<c_2<\dots <c_{2^K}=\frac{1}{2}}.\]
    \item Put $\displaystyle y_k = |\gamma(c_k)| \approx \sum_{n=-N}^N \paren{h^{2n}(c_k)-h^{2n}(c_0)}$ for $k=0,1,\dots,2^K$ so that $[0,1]$ is (approximately) partitioned by 
    $\set{0=y_0 < y_1 < \dots < y_{2^K}=1}.$
    \item For $k = 1,2,\dots,2^K$, color the rectangles 
    \begin{equation*}
    \paren{\gamma(c_k)\setminus\gamma(c_{k-1})}\times [y_{k-1}, y_{k}] 
    \approx \bigcup_{n = -N}^N [h^{2n}(c_{k-1}),h^{2n}(c_k)]\times [y_{k-1}, y_{k}].
    \end{equation*}
  \end{enumerate}
\end{remk}
\begin{exam}
  Draw graphs of $f$ using $h(x) = 1-\sqrt{1-x^2}$ and $h^{-1}(x) = \sqrt{1-(1-x)^2}$.
\end{exam}
\begin{figure}
  \includegraphics[width=0.5\textwidth,height=0.575\textwidth]{HairpinFactorK10N5.pdf}
\end{figure}

\section{Research Plan:}
\begin{enumerate}
  \item Either $\sigma_C$ or $\sigma^*_C$ is trivial (containing only sets of measure $0$ or $1$) IFF $C$ has an absolutely continuous part???
  \item $\sigma_C$ and $\sigma^*_C$ are non-trivial and non-atomic 
   IFF $C = L_f*R_g$.   Proof???\\  
   Of course, we can take $g= e$ (the identity) if  $\sigma_C = \mathscr{B}$ and similarly for $\sigma^*_C$.  Consider copulas with hairpin support.  Then $\sigma_C$ and $\sigma_C^*$ are non-trivial, non-atomic? but $C\neq L*R$?, right?
  \item The support of $L_f*R_g$ ``is'' the graph of $f(x) = g(y)$.  Proof???
  \item \textbf{Applications:} Do copulas with hairpin support or self-similar support have non-trivial and non-atomic $\sigma_C$?  If yes, we can factor them as $L*R$?, a surprising result!
  \item For $C=L*R$ or a singular copula $C$, denoting $T=T_C$, $T^* = T_C^*$ and starting from $S_0\subset [0,1]$, let us define $R_0, R_1, \dots$ and $S_1,S_2,\dots$ by 
  \[\mathds{1}_{R_0} = T\mathds{1}_{S_0} \text{ and } \mathds{1}_{S_1} = T^*\mathds{1}_{R_0}\]
  and so on.  This requires too much.  It assumes that $S_i$'s are in $\sigma_C$ and $R_i$'s are in $\sigma^*_C$.  We want this iterative process to converge to a set in those $\sigma$-algebras.  Find a new process.  Don't forget copulas with hairpin support. 

So what about 
  ${R_0} = \Supp T\mathds{1}_{S_0} \text{ and } {S_1} = \Supp T^*\mathds{1}_{R_0}$ ??

Do we have $S_0 \subset S_1 \subset \cdots$ and  $R_0 \subset R_1 \subset \cdots$ ??

  \item In which general setting does taking supremum over all bijective transformations give a measure that maps all $L*R$ to $1$? An example is $\omega_*$.
  \item Is there a good reason why we might want to take supremum over all transformations $f$ for which $X,f(X),Y$ is a Markov chain?
  \item Is $\set{L*R}$ dense in $\mathscr{C}_2$ w.r.t.~$\partial$-convergence??
  \item Any relation with $C_{\phi,\psi}$?
  \item How to extract singular part (if any) out of $C$ using $\sigma_C$ \& $\sigma^*_C$?
  \item Extend to $d$-copulas?
\end{enumerate}
}

\end{document}